\def\Bx             {{\rm Box}}
\def\Bcx           {{\rm Bx}}
\def\SSigma         {{\bf \Sigma}}
\def\ZZ                 {{\mathbb Z}} 
\def\PP                {{\mathbb P}} 
\def\RR                 {{\mathbb R}} 
\def\CC                 {{\mathbb C}} 
\def\QQ                 {{\mathbb Q}}
\def\Aa    {{\mathcal A}}
\newtheorem{lemma}{Lemma}[section] 
\newtheorem{theorem}[lemma]{Theorem} 
\newtheorem{corollary}[lemma]{Corollary} 
\newtheorem{proposition}[lemma]{Proposition} 
\theoremstyle{definition} 
\newtheorem{definition}[lemma]{Definition} 
\newtheorem{example}[lemma]{Example}
\newtheorem{remark}[lemma]{Remark} 
\theoremstyle{remark} 
\newtheorem*{proof*}{Proof} 
\title[Toric DM stacks and the bbGKZ system]{Toric Deligne-Mumford stacks and the 
better behaved version of the GKZ hypergeometric system}
\author[R. Paul Horja]{R. Paul Horja}
\address{Department of Mathematics\\ 
Oklahoma State University \\ 
Stillwater, OK 74078, USA\\
horja@math.okstate.edu}
\thanks{The author
was partially supported by the NSA grant MDA904-10-1-0190.}
\begin{document}

\begin{abstract}
We generalize the combinatorial description of the orbifold (Chen--Ruan)
cohomology and of the Grothendieck ring of a 
Deligne--Mumford toric stack and its associated stacky fan
in a lattice $N$ in the presence of a
deformation parameter $\beta \in N \otimes \CC.$
As an application, we construct a topological mirror symmetry
map that produces a complete system of $\Gamma$--series solutions
to the better behaved version of the GKZ hypergeometric system
for $\beta \in N \otimes \CC.$
\end{abstract}

\maketitle

\section{Introduction}

This note is a continuation of some recent joint
work of the author with Lev Borisov \cite{BH11} on the better behaved
version of the Gelfand--Kapranov--Zelevinsky hypergeometric system. 
The first two sections after the introduction, which may be of 
independent interest, contain 
a construction of certain modules over the Stanley--Reisner (SR) cohomology and
describe their Grothendieck ring interpretation in the context of 
toric Deligne-Mumford stacks. The notion of SR--cohomology 
and the computation of the Grothendieck ring 
of a smooth Deligne--Mumford stack have been 
discussed in \cite{BH1}; when the stack is projective,
the SR--cohomology is the same as the orbifold (Chen--Ruan)
cohomology. For a given stacky fan $\SSigma$ in a lattice $N$ and a
parameter $\beta \in N \otimes \CC,$ we give a combinatorial
description (cf. Definition \ref{def:coh})
of the deformed SR--cohomology module $H\ZZ[\SSigma;\beta]$, and of the 
deformed Grothendieck ring $K\CC[\SSigma;\beta]$ (cf. Definition \ref{def:kth1}). 
Not surprisingly, the two deformed notions 
are related by a Chern character--type map (Theorem \ref{thm:iso1}) and, 
in the case of an 
integral parameter $\beta \in N,$  we recover the geometric calculations
of SR--cohomology and of the Grothendieck ring 
of a toric Deligne--Mumford stack, respectively.
Moreover, Proposition
\ref{prop:rel} shows that it is possible to reduce the discussion for 
a complex parameter $\beta \in N \otimes \CC$ to the more geometric 
case of the real parameter $\Re \beta + \delta \Im \beta \in N \otimes \RR$
for all sufficiently small real numbers $\delta > 0.$
It is tempting to conjecture that the 
constructions presented here and the presence of certain ``walls" in 
the space of parameters $\beta \in N \otimes \RR$ 
(cf. Example \ref{ex:ex})
have an equivariant algebra--geometric
interpretation, but we postpone the discussion of such
speculations for future work.

As an application, we present a 
construction of a complete 
set of $\Gamma$ series solutions to the better behaved GKZ system 
in the case $\beta \in N \otimes \CC.$ For the classical GKZ 
system, such solutions have been obtained in various degrees of generality 
in the works of \cite{HLY}, \cite{stienstra}, \cite{BH2}, as well
as in the book \cite{SST}.

In a celebrated paper, Gelfand, Kapranov and Zelevinsky \cite{GKZ}
considered a system of linear partial differential equations 
associated to the set $\Aa$ and any parameter 
$\beta \in N \otimes \CC.$ Besides being an interesting 
object to study in itself, the GKZ system has become a 
fundamental tool in mirror symmetry after the discovery
by \cite{Batyrev} that the Picard-Fuchs equations 
associated to the periods of Calabi--Yau hypersurfaces 
in toric varieties are in fact of GKZ type. 
If the toric ideal associated to $\Aa$ is not Cohen-Macaulay, 
there are non-generic values of $\beta$ for which the rank 
of the solution set jumps. A thorough analysis of this 
property is contained in the work \cite{MMW}. However, this phenomenon 
brings with it a degree of non-functoriality in
some of the applications of the GKZ system to 
the more recent categorical questions in 
algebraic geometry and homological mirror symmetry.

This discrepancy constituted the starting point of 
our recent joint work \cite{BH11}, where we proposed 
a {\it better
behaved} version of the GKZ system whose
space of solutions has the expected
number of solutions in all cases. We framed the definition 
in a context where the lattice is replaced
by a finitely generated abelian group $N.$
and the set $\Aa$ determining the PDE system
is replaced by an 
$n$-tuple of elements of $N,$ 
with possible repetitions. 

In section \ref{sec:bbgkz} of this paper, we only consider the case
when $N$ is a lattice of rank $d,$
and 
$\Aa=\{ v_1, \ldots, v_k\}$ is a set. We assume that the 
elements of $\Aa$ generate the lattice as an abelian group 
and that there exists a group homomorphism ${\rm deg} : N \to \ZZ$
such that ${\rm deg} (v_i)=1$ for any element $v_i \in \Aa.$
Let $K \subset N \otimes \RR$ be the cone over the 
polytope $\Delta={\rm conv}(\Aa),$ the convex hull of the elements
of $\Aa.$ 

For any parameter $\beta$ in $N \otimes \CC,$ the {\it better 
behaved Gelfand--Kapranov--Zelevinsky system} consists of the system of
partial differential equations
\begin{equation}\label{eq:gkz1}
\partial_i\Phi_v=\Phi_{v+v_i},~{\rm~for~all~}v\in K \cap N,~i\in\{1,\ldots,k\},
\end{equation}
and the linear equations

\begin{equation}\label{eq:linear}
\sum_{i=1}^k g(v_i) x_i\partial_i \Phi_v=g(\beta-v) \Phi_v, 
{\rm~~for~all~} g \in M={\rm Hom}(N, \ZZ),~v\in K \cap N. 
\end{equation}
A solution to the better behaved GKZ system 
is then a
sequence of functions of $k$ variables 
$\big(\Phi_v(x_1, \ldots, x_k)\big)_{v \in K \cap N}.$
Alternatively, it can 
be viewed as a function in $k$ 
variables  
$$\Phi_K(x_1, \ldots, x_k):=\sum_{v \in K \cap N} \Phi_v (x_1, \ldots, x_k) [v]$$ 
with values in the completion $\CC[K \cap N]^\wedge$
of the ring  
$\CC[K \cap N].$ In section \ref{sec:bbgkz} of this note, we use the 
constructions of the first section to obtain 
${\rm vol} (\Delta)$ linearly independent $\Gamma$--series
solutions for the better behaved GKZ system which are
analytic in an non--empty open set $U_\SSigma$ corresponding
to any regular simplicial fan structure supported on $K$
whose one dimensional cones are generated by elements of $\Aa.$
The solutions are obtained with 
the help of a ``topological mirror symmetry map"
(cf. Theorem \ref{thm:gammanontor}). 
In conjunction
with the  results of \cite{BH11}, this construction
gives an explicit complete set of solutions in the general case
of a finitely generated abelian group $N,$ a collection 
$\Aa$ with possible repetitions and any parameter $\beta
\in N \otimes \CC.$

\section{Deformed modules over the SR--cohomology of a reduced toric DM stack}
\label{sec1}

In this section, 
$N \simeq \ZZ^d$ is a lattice of rank $d,$
$\SSigma=(\Sigma, \{v_i\}_{1 \leq i \leq k})$ is
a simplicial stacky fan; we assume that
any of the elements $v_i \in N$ generates
a ray (one dimensional cone) of the rational
simplicial fan $\Sigma.$

\

{\it Notation.}
As it is customary,  we denote by
$\{a\}$, $0 \leq \{a\} <1,$ the 
fractionary part of the real number $a.$ 
For any cone 
$\sigma$ of the fan $\Sigma,$ we will denote by 
$I(\sigma) \subset \{1, 2, \ldots, k\}$ 
the subset consisting of those indices $i$ such that 
$v_i$ generates a ray of the cone $\sigma.$ 
To any subset $I \subset \{1, 2, \ldots, k\},$ 
we associate the cone $\sum_{i \in I} \RR_{\geq 0} v_i$
and we denote it by $\sigma(I).$ We denote by $\sigma$
the cone as well as its support in $N \otimes \RR,$ and 
by $\Sigma$ the union of the supports of all the cones
of the fan.


\

For pedagogical reasons, we begin by assuming  that  
$\beta \in N \otimes \RR$ is the fixed parameter.  
For a maximal cone $\sigma$ of $\Sigma,$ 
let
$\Bx (\sigma; \beta)$ denote the finite subset of 
elements  
$c \in \sigma \subset N \otimes \RR$ 
of the form $n + \beta,$
for some $n \in N,$ 
such that
$$c=n+\beta= \sum_{i=1}^k \alpha_i v_i,$$
with $0 \leq \alpha_i <1,$ for all $i, 
1 \leq i \leq k,$ and 
$\alpha_i=0,$ if $i \notin I(\sigma).$

Let $\Bx (\SSigma; \beta)$ be the union of the sets
$\Bx (\sigma; \beta)$ for all the maximal cones
$\sigma$ of
$\Sigma.$ For any element 
$c$ in $\Bx (\SSigma,\beta),$ the 
minimal cone of $\Sigma$ containing 
$c$ is denoted by $\sigma(c)$ and it 
is called the {\it support cone of $\alpha.$} 
When $\beta \in N,$ the set 
$\Bx (\SSigma; \beta)$ coincides with the
usual set 
$\Bx (\SSigma) \subset N$ from the
theory of toric DM stacks.

Let $\ZZ[\SSigma]$ be the deformed semigroup 
ring defined as the free abelian group with the basis elements 
$[n]$ for all $n \in N \cap \Sigma,$ and the product 
$[m] \cdot [n]= [m+n],$ when $m$ and $n$ belong
to some cone of the fan, and $0$ otherwise. 
This will be the ground ring in our discussion, so we 
call it $R.$ 

\begin{definition}\label{def:1}
The $R$--module $\ZZ[\SSigma; \beta]$ 
is generated as a
free abelian group by the basis elements
$[n+\beta]$ for all $n \in N$ such that
$n+\beta \in \Sigma.$
The module structure is given by 
the product $[n^\prime] \cdot [n+\beta ]:=[n^\prime+n + \beta],$ 
if there exists a cone of the fan containing $n^\prime$ and $n+\beta,$
and $0$ otherwise. 
\end{definition}

\begin{remark}\label{rem:boxlimit}
The underlying theme of this section has to do with
the change in the structure of the modules 
$\ZZ[\SSigma; \beta]$ when the parameter $\beta$ is allowed to vary
in $N \otimes \RR.$ The module structure is related
to the combinatorics of the set $\Bx (\SSigma; \beta),$ and the following example features 
one of the subtleties that has to be considered.

In the lattice $N=\ZZ^2$ consider the fan 
$\Sigma$ whose two maximal cones are generated by 
the pairs of vectors $\{v_1,v_2\}$ and $\{v_2,v_3\},$ where 
$v_1=(1,0), v_1=(1,1), v_2=(1,2).$ Let $\beta= (\delta, 0) \in N \otimes \RR,$
with $0 < \delta < 1.$  We see that $\Bx (\SSigma;\beta)=
\{ (\delta,0), (1+ \delta,2) \},$ and that the pointwise
limit in $N \otimes \RR \simeq \RR^2$ of 
$\Bx (\SSigma;\beta)=
\{ (\delta,0), (1+ \delta,2) \}$ is the set $\{ (0,0), (1,2) \}.$ 
On the other hand, it is certainly the case that 
$(\delta,0) \to (0,0)$ when $\delta \to 0,$ and 
$\Bx (\SSigma;(0,0))=\{ (0,0) \}.$ It is possible
to gain a general understanding of such a limiting 
process by defining a $\gamma$--deformed version of the
set $\Bx (\SSigma; \beta)$ for any $\gamma \in 
N \otimes \RR,$ but we leave the details of this
construction as an exercise. Moreover, 
a related  issue is the subject of Corollary \ref{cor:rel} which 
is proved below. 
\end{remark}

We now move to the more general
case $\beta \in N \otimes \CC.$
We  consider a version of 
the set $\Bx (\SSigma;\beta) \subset N \otimes \RR \simeq \RR^d$
adapted to this situation.
Given a maximal cone
$\sigma$ of the fan
$\Sigma,$ let
${\rm Bx} (\sigma; \beta) \subset \CC^k$ be defined as the finite set
consisting of elements $\alpha=(\alpha_i)_{1 \leq i \leq k}$ 
such that
$$
n + \beta = \sum_{i=1}^k \alpha_i v_i,$$
for some $n \in N,$ 
with $0 \leq \Re \alpha_i <1,$ for all $i, 
1 \leq i \leq k,$ and 
$\alpha_i=0,$ if $i \notin I(\sigma).$
It is clear that, in such a situation, we have that
$n + \Re \beta \in \sigma.$ The set
$\Bcx (\SSigma; \beta) \subset \CC^k$ is then defined as the 
union of the sets
$\Bcx (\sigma; \beta)$ for all the maximal cones
$\sigma$ of $\Sigma.$ 
For any element 
$\alpha$ in $\Bcx (\SSigma,\beta),$ the 
support cone $\sigma(\alpha)$ of 
$\alpha$ is now defined as the
cone of $\Sigma$ generated by all the $v_i$ 
such that $\alpha_i \not=0.$ 

Note that the cardinality of the finite set 
$\Bcx(\sigma; \beta)$ is given by
the index of the sublattice generated
by the generators of the cone $\sigma$ in $N.$ 
If $\Im \beta$ is generic, the
sets $\Bcx(\sigma; \beta)$ are disjoint, otherwise
$\beta$ determines their intersections. 
When $\beta \in N \otimes \RR,$
the definition of
$\Bcx (\SSigma; \beta)$ is equivalent to the
definition of $\Bx (\SSigma;\beta)$
given earlier in this section, 
since in that case there is a one-to-one 
correspondence between the element
$\alpha=(\alpha_i) \in \Bcx (\sigma; \beta)$
and the corresponding element
$c=\sum_{i=1}^k \alpha_i v_i \in \sigma \cap (N + \beta).$
For a general $\beta \in N \otimes \CC,$
it may happen that for distinct elements 
$\alpha, \alpha^\prime \in \Bcx(\SSigma; \beta),$
we have that 
$\sum_{i=1}^k \alpha_i v_i =\sum_{i=1}^k \alpha^\prime_i v_i.
$ In order to account for such occurences, we have to adjust 
Definition \ref{def:1} accordingly.

\begin{definition}\label{def:2}
Given $\beta \in N \otimes \CC,$
the $R$--module $\ZZ [\SSigma; \beta]$ is 
defined as the free abelian group with the basis elements
$[n+\beta, \alpha]$ for all $n \in N$
and $\alpha=(\alpha_i) \in \Bcx (\sigma; \beta)$ 
for some maximal cone $\sigma$ of $\SSigma$
such that 
$$n + \beta \in
\sum_{i=1}^k \alpha_i v_i + \sum_{i \in I(\sigma)} \ZZ_{\geq 0} v_i.$$
The module structure is then defined by 
the product $[n^\prime] \cdot [n + \beta, \alpha]:=[n^\prime+n+\beta,\alpha^\prime],$ 
if there exists a maximal cone $\sigma$ of $\SSigma$ such that
$n^\prime \in \sigma \cap N,$ 
$\alpha \in \Bcx(\sigma; \beta),$ and 
$$n + \beta  \in
\sum_{i=1}^k \alpha_i v_i + \sum_{i \in I(\sigma)} \ZZ_{\geq 0} v_i, \
n^\prime +n +\beta 
\in 
\sum_{i=1}^k \alpha_i^\prime v_i + 
\sum_{i \in I(\sigma)} \ZZ_{\geq 0} v_i.$$
If no such maximal cone $\sigma$ exists, then the 
product is $0$ by definition. 
\end{definition}

If the product is non-zero, for any maximal cone
$\sigma$ as above we have that 
$\alpha^\prime \in \Bcx(\sigma;\beta).$
 It is easy to see that
the module structure is well defined, i.e. 
the product structure does not depend on the choice of a maximal cone 
$\sigma$ in the above definition.

Let $S_\SSigma$ be the subring of $R$ generated  
by the elements $[v_i].$ We then have the 
the following direct sum
decomposition of the $R$--module
$\ZZ [\SSigma;\beta]$ 
into $S_\SSigma$--submodules: 
\begin{equation}\label{decomp:ds}
\ZZ [\SSigma;\beta] = \bigoplus_{\alpha \in \Bcx (\SSigma;\beta)}
S_{\SSigma} \cdot [\sum_ {i=1}^k \alpha_i v_i, \alpha].
\end{equation}

\begin{remark}\label{rem:11}
When $\beta \in N \otimes \RR,$
the $R$--module structures introduced in definitions \ref{def:1} and
\ref{def:2} are identified under the map $[n+\beta] \mapsto
[n + \beta, \alpha].$ Indeed, given $n + \beta \in \Sigma,$
we consider the minimal cone 
$\sigma(n+\beta)$ of $\SSigma$ containing 
$n + \beta,$ and we write that
$n + \beta= \sum_{i=1}^k q_i v_i$ with 
$q_i \in \RR_{\geq 0},$ and 
$q_i = 0,$ if $i \notin I(\sigma(n+\beta)).$
The element $\alpha=(\alpha_i) \in \Bcx (\SSigma;\beta) 
\subset \RR^k,$ is then uniquely defined by setting
$\alpha_i: = \{ q_i \},$ for all $i.$ 
\end{remark}

As the following statement shows, the real and complex 
cases of the parameter $\beta$ are closer than one might have 
suspected at first glance.  

\begin{proposition}\label{prop:rel}
For any $\beta \in N \otimes \CC.$
there exists an
isomorphism of $R$-modules
$
\ZZ[\SSigma; \beta] \simeq
\ZZ [\SSigma; \Re \beta + \delta \Im \beta],
$
for all
sufficiently small $\delta >0.$
\end{proposition}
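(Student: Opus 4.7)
The plan is to construct an explicit $R$-module isomorphism via a support-preserving bijection on deformed box sets. Write $\beta_\delta := \Re\beta + \delta\Im\beta$. For $\alpha \in \Bcx(\SSigma;\beta)$ with support cone $\sigma_\alpha$, define
\[
\phi(\alpha)_i := \{\Re\alpha_i + \delta\Im\alpha_i\} \text{ for } i \in I(\sigma_\alpha), \qquad \phi(\alpha)_i := 0 \text{ otherwise,}
\]
where $\{\cdot\}$ denotes the fractional part. Splitting $\sum_i \alpha_i v_i = n_\alpha + \beta$ into real and imaginary parts yields $\sum_i \phi(\alpha)_i v_i = n_\alpha + \beta_\delta + \sum_i l_i v_i$, where $l_i := -\lfloor \Re\alpha_i + \delta\Im\alpha_i \rfloor$. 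Taking $\delta$ small enough that $\delta|\Im\alpha_i| < \min(\Re\alpha_i,\, 1-\Re\alpha_i)$ whenever $\Re\alpha_i > 0$ (finitely many conditions), the $l_i$ lie in $\{0,1\}$, with $l_i = 1$ precisely for those $i \in I(\sigma_\alpha)$ with $\Re\alpha_i = 0$ and $\Im\alpha_i < 0$. Thus $\phi(\alpha)_i \neq 0$ iff $\alpha_i \neq 0$, so $\phi$ preserves support cones and $\phi(\alpha) \in \Bcx(\sigma;\beta_\delta)$ for every maximal cone $\sigma \supseteq \sigma_\alpha$.

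Next I would verify bijectivity of $\phi$. Fix a maximal cone $\sigma$. For any $\alpha \in \Bcx(\sigma;\beta)$, the coordinates $\Im\alpha_i$, $i \in I(\sigma)$, are uniquely determined by the basis expansion $\Im\beta = \sum_{i \in I(\sigma)} \Im\alpha_i v_i$. Hence if $\phi(\alpha) = \phi(\alpha^\ast)$ for $\alpha, \alpha^\ast \in \Bcx(\sigma;\beta)$, then $\Im\alpha = \Im\alpha^\ast$ and equality of fractional parts forces $\Re\alpha = \Re\alpha^\ast$. Since $|\Bcx(\sigma;\beta)| = [N : \sum_{i \in I(\sigma)} \ZZ v_i]$ is independent of $\beta$, this suffices for bijectivity. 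Writing $n_\alpha \in N$ for the element with $\sum_i \alpha_i v_i = n_\alpha + \beta$ and $n_{\phi(\alpha)}$ analogously, I extend $\phi$ to a $\ZZ$-linear map
\[
\Phi \colon [n+\beta, \alpha] \longmapsto [(n - n_\alpha) + n_{\phi(\alpha)} + \beta_\delta,\ \phi(\alpha)],
\]
which is well-defined since $n - n_\alpha \in \sum_{i \in I(\sigma)} \ZZ_{\geq 0} v_i$, and is a bijection of bases by the decomposition \eqref{decomp:ds}.

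The principal obstacle is verifying that $\Phi$ intertwines the full $R$-action $[n'] \cdot (-)$. When $[n'] \cdot [n+\beta, \alpha] = [n'+n+\beta, \alpha']$ is nonzero in $\ZZ[\SSigma;\beta]$, writing $n = n_\alpha + \sum_i m^{(0)}_i v_i$ and $n' + n + \beta = \sum_i \alpha'_i v_i + \sum_i m_i v_i$ (with $m^{(0)}_i, m_i \geq 0$), a direct computation shows that the corresponding product in $\ZZ[\SSigma;\beta_\delta]$ has coefficients $m''_i = m_i + l_i - l'_i$, where $l_i, l'_i \in \{0,1\}$ are the boundary indicators for $\alpha, \alpha'$. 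Since $\Im\alpha_i = \Im\alpha'_i$ (both elements lie in $\Bcx(\sigma;\beta)$ and the imaginary parts are basis-determined), the only potentially negative $m''_i$ occurs when $l_i = 0$, $l'_i = 1$, $m_i = 0$. This configuration forces $\Re\alpha_i > 0$ and $\Re\alpha'_i = 0$, but then the $i$-th coordinate of $n'$ in the $\sigma$-basis equals $(\Re\alpha'_i - \Re\alpha_i) + (m_i - m^{(0)}_i) = -\Re\alpha_i - m^{(0)}_i < 0$, contradicting $n' \in \sigma$. A symmetric check handles the reverse direction and the simultaneous vanishing of the two products, completing the proof.
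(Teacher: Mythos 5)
Your construction of $\phi$ on the box sets, the bijection of bases, and the bookkeeping $m''_i=m_i+l_i-l'_i$ are all correct, and the shift $n\mapsto n-n_\alpha+n_{\phi(\alpha)}=n+\sum_i l_i v_i$ does repair a real well-definedness problem (the unshifted assignment, which is what the paper's $\varphi_\delta$ uses, need not land on a basis element when some $l_i=1$). The gap is in the intertwining step: you verify that $[n']\cdot\Phi([n+\beta,\alpha])$ is a \emph{nonzero} basis element by ruling out $m''_i<0$, but you never verify that it is the \emph{same} basis element as $\Phi([n'+n+\beta,\alpha'])$. The former has first coordinate $n'+n+\sum_i l_i v_i+\beta_\delta$ (offsets $m_i+l_i-l'_i$ over $\phi(\alpha')$), while the latter has first coordinate $n'+n+\sum_i l'_i v_i+\beta_\delta$ (offsets $m_i$). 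These coincide only when $l_i=l'_i$ for all $i\in I(\sigma)$, and the configuration $l_i=1$, $l'_i=0$ passes your non-negativity check (it gives $m''_i=m_i+1$) while shifting the answer by $v_i$. The underlying obstruction is that your translation depends on $\alpha$, whereas the $R$-action mixes components indexed by different $\alpha$'s carrying different translations; so no ``componentwise translation'' of this form can be $R$-equivariant in general.

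Here is an explicit failure. Take $N=\ZZ$, $k=1$, $v_1=2$, $\Sigma=\{0,\RR_{\geq 0}\}$, $\beta=-\sqrt{-1}$, so $\beta_\delta=-\delta$. Then $\Bcx(\sigma;\beta)=\{\alpha^{(0)},\alpha^{(1)}\}$ with $\alpha^{(0)}_1=-\sqrt{-1}/2$ (hence $l^{(0)}_1=1$) and $\alpha^{(1)}_1=(1-\sqrt{-1})/2$ (hence $l^{(1)}_1=0$); one finds $\phi(\alpha^{(0)})_1=1-\delta/2$, $\phi(\alpha^{(1)})_1=(1-\delta)/2$, and $n_{\phi(\alpha^{(0)})}-n_{\alpha^{(0)}}=2$ while $n_{\phi(\alpha^{(1)})}-n_{\alpha^{(1)}}=0$. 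Now $[1]\cdot[0+\beta,\alpha^{(0)}]=[1+\beta,\alpha^{(1)}]$, so $\Phi\bigl([1]\cdot[0+\beta,\alpha^{(0)}]\bigr)=[1+\beta_\delta,\phi(\alpha^{(1)})]$, whereas $[1]\cdot\Phi\bigl([0+\beta,\alpha^{(0)}]\bigr)=[1]\cdot[2+\beta_\delta,\phi(\alpha^{(0)})]=[3+\beta_\delta,\phi(\alpha^{(1)})]$. These are distinct basis elements, so $\Phi$ is not $R$-linear. (The proposition itself survives here: both modules are free of rank one over $R\simeq\ZZ[t]$, generated by $[0+\beta,\alpha^{(0)}]$ and $[1+\beta_\delta,\phi(\alpha^{(1)})]$ respectively, and the isomorphism sends $n\mapsto n+1$ --- note that it permutes the box components rather than preserving them.) To close the gap you would need either to show $l_i=l'_i$ whenever both products are nonzero (false, as above), or to build the isomorphism by a different mechanism than an $\alpha$-by-$\alpha$ translation.
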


\begin{proof} 
The combinatorics that
determines the structure 
of 
$\ZZ[\SSigma; \Re \beta + \delta \Im \beta]$
as an $R$--module
``stabilizes" when $\delta >0$ is sufficiently small.
We must show that this stabilized $R$--module 
structure is isomorphic to the $R$--module 
$\ZZ [\SSigma; \beta].$ 

We define the map $\varphi_\delta : \ZZ [\SSigma; \beta] \to \ZZ[\SSigma; \Re \beta + \delta \Im \beta]$
by setting $\varphi_\delta ([n+\beta, \alpha]):= [n+\Re \beta + \delta \Im \beta, \alpha_\delta],$
where 
$(\alpha_\delta)_i : = \{ \Re \alpha_i + \delta \Im \alpha_i\}.$ Note 
that the map is well defined, since the support cone $\sigma(\alpha_\delta)$
is a subcone of $\sigma(\alpha),$ so any maximal cone which contains
$\sigma(\alpha)$ will also contain $\sigma(\alpha_\delta).$ 
In fact, if $\delta >0$ is small enough so that
$\Re \alpha_i + \delta \Im \alpha_i <1$ for all $i,$  
the support cones
$\sigma(\alpha)$ and $\sigma(\alpha_\delta)$ coincide. 

Let $n, n^\prime \in N$ and $\sigma$ a maximal 
cone such that
$$
n^\prime \in \sum_{i=1}^k  \gamma_i v_i  + \sum_{i \in I(\sigma)} \ZZ_{\geq 0} v_i, 
n+\beta \in \sum_{i=1}^k \alpha_i v_i  + \sum_{i \in I(\sigma)} \ZZ_{\geq 0} v_i, 
$$
where $\gamma=(\gamma_i) \in \Bcx (\SSigma; 0) \subset \QQ^k,$ 
$\alpha=(\alpha_i) \in \Bcx (\SSigma; \beta) \subset \CC^k.$
In the $R$--module $\ZZ[\SSigma;\beta],$ we then have that 
$[n^\prime] \cdot [n+\beta, \alpha] = [n^\prime+n +\beta, \alpha^\prime],$
where
$\alpha^\prime_i= \{ \gamma_i + \Re \alpha_i\}+  \sqrt{-1} \Im \alpha_i,$
for all $i.$ We see that
$$
\varphi_\delta ([n^\prime] \cdot [n+\beta, \alpha])=
[n^\prime +n + \Re \beta + \delta \Im \beta, \alpha^\prime_\delta]=
[n^\prime] \cdot \varphi_\delta ([n+\beta,\alpha]),$$
where 
$(\alpha^\prime_\delta)_i=\{ \gamma_i + \Re \alpha_i + \delta \Im \alpha_i \}.$
This shows that $\varphi$ is an $R$--module homomorphism. 

It is clear that 
$\varphi_\delta$ is a monomorphism. 
We now show that $\varphi_\delta$ is an epimorphism provided that $\delta >0$ is sufficiently small.
Let
$[n+ \Re \beta + \delta \Im \beta, \alpha_\delta]$ be an element
in the ``stabilized" module $\ZZ[\SSigma; \Re \beta + \delta \Im \beta].$ 
We assume that 
$\delta>0 $ is small enough so that 
the support cone $\sigma(\alpha_\delta)$ is constant
and contained in a maximal cone $\sigma$ such that
$$
n+ \Re \beta + \delta \Im \beta=
\sum_{i \in I(\sigma)} 
((\alpha_\delta)_i +p_i) v_i,
$$
with $p_i \in \ZZ_{\geq 0}$ independent of $\delta.$
Since $n + \Re \beta \in \sigma,$ there exists
$\alpha \in \Bcx(\sigma;\beta)$ such that
$$
n + \beta= \sum_{i \in I(\sigma)} 
(\alpha_i +p^*_i) v_i,
$$
with
$p_i^* \in \ZZ_{\geq 0}.$ We are dealing with
linear combinations over the generators of
a maximal cone, so it must be the case that,
for all $i,$ 
$(\alpha_\delta)_i : = \{ \Re \alpha_i + \delta \Im \alpha_i\}.$
Hence, $\varphi_\delta ([n+ \beta, \alpha])=
[n+ \Re \beta + \delta \Im \beta, \alpha_\delta],$
and this ends the proof of the proposition.
\end{proof}

\begin{corollary}\label{cor:rel}
For any $\beta \in N \otimes \CC$
and any sufficiently small $\delta >0,$
there exists a triple one-to-one correspondence 
$\alpha \leftrightarrow \alpha_\delta \leftrightarrow
c(\alpha_\delta)$ 
among the finite sets 
$\Bcx (\SSigma;\beta) \subset \CC^k,$ 
$\Bcx (\SSigma;\Re \beta+ \delta \Im \beta) \subset \RR^k$
and $\Bx (\SSigma;\Re \beta+ \delta \Im \beta) 
\subset N \otimes \RR,$ such that the support
cones $\sigma(\alpha),$
$\sigma(\alpha_\delta)$ and $\sigma(c(\alpha_\delta))$ coincide.
\end{corollary}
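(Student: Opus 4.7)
The plan is to extract the first correspondence from the isomorphism in Proposition \ref{prop:rel}, pick up the second correspondence from the identification recorded in the passage between Remark \ref{rem:boxlimit} and Definition \ref{def:2}, and then verify that the support cones transport correctly.

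First, I would apply the direct sum decomposition (\ref{decomp:ds}) to both $\ZZ[\SSigma;\beta]$ and $\ZZ[\SSigma;\Re\beta+\delta\Im\beta]$. Each is a free $S_\SSigma$-module whose summands are indexed by $\Bcx(\SSigma;\beta)$ and $\Bcx(\SSigma;\Re\beta+\delta\Im\beta)$ respectively. The explicit formula $\varphi_\delta([n+\beta,\alpha])=[n+\Re\beta+\delta\Im\beta,\alpha_\delta]$ shows that $\varphi_\delta$ carries the summand indexed by $\alpha$ into the summand indexed by $\alpha_\delta$ where $(\alpha_\delta)_i=\{\Re\alpha_i+\delta\Im\alpha_i\}$. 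Since Proposition \ref{prop:rel} gives that $\varphi_\delta$ is an isomorphism for sufficiently small $\delta>0$, the induced map $\alpha\mapsto\alpha_\delta$ must be a bijection $\Bcx(\SSigma;\beta)\to\Bcx(\SSigma;\Re\beta+\delta\Im\beta)$.

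Next, since $\Re\beta+\delta\Im\beta\in N\otimes\RR$, I can invoke the equivalence already noted in the paragraph immediately before Definition \ref{def:2}: for a real parameter there is a canonical bijection $\alpha_\delta\leftrightarrow c(\alpha_\delta):=\sum_{i=1}^k(\alpha_\delta)_i v_i$ between $\Bcx(\SSigma;\Re\beta+\delta\Im\beta)$ and $\Bx(\SSigma;\Re\beta+\delta\Im\beta)$. Composing gives the asserted triple correspondence.

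Finally, I would check the support cones. By definition $\sigma(\alpha_\delta)$ is generated by the $v_i$ with $(\alpha_\delta)_i\neq 0$, which coincides with $\sigma(c(\alpha_\delta))$, the minimal cone containing $c(\alpha_\delta)$. The only nontrivial matching is $\sigma(\alpha)=\sigma(\alpha_\delta)$: if $\alpha_i=0$ then $(\alpha_\delta)_i=\{0\}=0$, and conversely if $\alpha_i\neq 0$ then, for all sufficiently small $\delta>0$, either $\Re\alpha_i\neq 0$ in which case $\{\Re\alpha_i+\delta\Im\alpha_i\}\neq 0$ (because $0<\Re\alpha_i<1$ and we can choose $\delta$ with $0<\Re\alpha_i+\delta\Im\alpha_i<1$), or $\Re\alpha_i=0$ and $\Im\alpha_i\neq 0$ in which case $\delta\Im\alpha_i\neq 0$ and again has fractional part equal to itself for small enough $\delta$. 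Taking $\delta$ smaller than a uniform bound over the finite set $\Bcx(\SSigma;\beta)$ ensures the equality $\sigma(\alpha)=\sigma(\alpha_\delta)$ in all cases.

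The main subtlety is the last bookkeeping step on support cones, particularly the case where $\Re\alpha_i=0$ but $\Im\alpha_i\neq 0$; one must be sure that $\alpha_i\neq 0$ is not accidentally sent to $(\alpha_\delta)_i=0$ when taking fractional parts. Handling this requires choosing $\delta$ small uniformly across the finitely many generators of the maximal cones and across the finitely many elements of $\Bcx(\SSigma;\beta)$, which is exactly the stabilization threshold already used in Proposition \ref{prop:rel}.
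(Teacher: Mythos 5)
Your proposal is correct and takes essentially the same route as the paper, which gives no separate proof of the corollary but reads it off from the proof of Proposition \ref{prop:rel}: the bijection $\alpha\leftrightarrow\alpha_\delta$ comes from the isomorphism $\varphi_\delta$ respecting the decomposition (\ref{decomp:ds}), the bijection $\alpha_\delta\leftrightarrow c(\alpha_\delta)$ is the real-parameter identification, and the equality of support cones is exactly the observation made in that proof for $\delta$ below the stabilization threshold. One tiny slip in your last step: when $\Re\alpha_i=0$ and $\Im\alpha_i<0$ the fractional part is $\{\delta\Im\alpha_i\}=1+\delta\Im\alpha_i$ rather than $\delta\Im\alpha_i$ itself, but it is still nonzero, so the conclusion $\sigma(\alpha)=\sigma(\alpha_\delta)$ is unaffected.
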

The ``pointwise"
limit (in $\RR^k$) of the set $\Bcx (\SSigma;\Re \beta+ \delta \Im \beta)$
is closely related, but in general it is 
not {\it equal} to the set $\Bcx (\SSigma; \Re \beta).$
This observation is related to the issue discussed in Remark
\ref{rem:boxlimit}.

\

We choose a basis $\{ g_1, \ldots, g_d \}$ of the
dual lattice $M= {\rm Hom}(N,\ZZ),$ $d={\rm rk} N.$
Let $Z_j \in S_{\SSigma} \subset \ZZ[\SSigma]$ be
given by
$$Z_j: = \sum_{i=1}^k g_j(v_i) [v_i], \ 1 \leq j \leq d,$$
and we denote by $Z:=(Z_1, \ldots, Z_d) \subset S_\SSigma$ the ideal
generated by $Z_1, \ldots, Z_d.$

\begin{definition}\label{def:coh}
We call 
the quotient $R$--module
$$
H\ZZ (\SSigma;\beta):=
\ZZ [\SSigma;\beta]/Z \cdot \ZZ[\SSigma;\beta],$$
the {\it deformed SR--cohomology} modules associated to the stacky fan 
$\SSigma$ and the element
$\beta \in N \otimes \CC.$
\end{definition}
Of course, the constructions presented so far can be performed
with the ring $\ZZ$ replaced by $\QQ$ or $\CC.$ When 
$\beta \in N,$ the deformed SR--cohomology module coincides
with the SR--cohomology ring $R/Z \cdot R$ which is the
SR--cohomology ring associated to the toric DM--stack
$\PP_\SSigma$ as described by
\cite[Definition 3.1]{BH1}. If in addition the stack is 
projective, then the SR--cohomology ring with $\QQ$ coefficients
is isomorphic to
the orbifold cohomology ring of $\PP_\SSigma$
(cf. \cite[Theorem 1.1]{BCS}).

Since $Z_j \in S_\SSigma,$ the direct sum decomposition
(\ref{decomp:ds})
descends to the following decomposition into a sum of $S_\SSigma/Z$--submodules:
\begin{equation}\label{eq:dec1}
H\ZZ (\SSigma;\beta)
\simeq \bigoplus_{\alpha \in \Bcx (\SSigma;\beta)}
(S_{\SSigma}/ Z)  \cdot [\sum_{i=1}^k \alpha_i v_i, \alpha].
\end{equation}
We should also note
that the ring $S_{\SSigma}/ Z$ is 
isomorphic to the Chow
ring of the toric Deligne--Mumford
stack $\PP_\SSigma$ which is the
same as the Chow ring of the
toric variety $\PP_\Sigma,$ i.e.
$A^\star(\PP_\Sigma)
\simeq A^\star(\PP_\SSigma) \simeq S_{\SSigma}/ Z$
(cf. \cite[Lemma 5.1]{BCS}).

\section{Deformations of the Grothendieck ring of
a reduced toric DM stack}\label{sec:kth}

\begin{definition}\label{def:kth1}
The {\it deformed Grothendieck ring}
$K\CC(\SSigma;\beta)$
associated to the stacky fan 
$\SSigma=(\Sigma, \{v_i\}_{1 \leq i \leq k})$ and the
parameter $\beta \in N \otimes \CC$ is defined
to be the quotient of the Laurent polynomial ring
$\CC[R_1^{\pm 1},\ldots,R_k^{\pm 1}]$
by the ideal $U+V,$ where $U$ is 
the ideal generated by the relations
$$\displaystyle \prod_{i \in I} (1 -R_i)= 0,$$ 
for any set 
$I \subset \{1, \ldots, k\}$ such that $v_i, i \in I,$ 
do not generate a cone of $\Sigma,$ and 
$V$ is the ideal generated by the relations
$$\prod_{i=1}^k R_i^{f(v_i)}= e^{2 \pi \sqrt{-1} f(\beta)},$$ 
for any linear function $f: N \to \ZZ.$ 
\end{definition} 

When $\beta \in N,$ the definition can be given over $\ZZ$
and in that case, the ring is isomorphic to the Grothendieck
ring $K_0 (\PP_\SSigma)$ of the associated reduced
toric Deligne--Mumford stack $\PP_\SSigma$
(cf. \cite[Theorem 4.10]{BH1}).

\begin{proposition} \label{prop:kth1}
The ring 
$K\CC(\Sigma;\beta)$ is Artinian. 
Its maximum ideals are in one-to-one correspondence with 
the elements of $\Bcx ({\bf\Sigma};\beta) \subset \CC^k$ 
as follows: any element $\alpha=(\alpha_i)$ 
in $\Bcx ({\bf\Sigma};\beta)$ corresponds to the
maximal ideal determined by the $n$-tuple 
$y=(y_i) \in \CC^k$ with
$y_i= e^{2 \pi \sqrt{-1} \alpha_i}.$ 
\end{proposition}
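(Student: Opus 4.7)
The plan is to identify the maximal ideals of $K\CC(\SSigma;\beta)$ with the closed points of $\mathrm{Spec}\, K\CC(\SSigma;\beta)$, realized as a subscheme of the torus $(\CC^*)^k = \mathrm{Spec}\,\CC[R_1^{\pm 1},\ldots,R_k^{\pm 1}]$. Since $\CC$ is algebraically closed and the ring is a finitely generated $\CC$-algebra, Hilbert's Nullstellensatz identifies maximal ideals with the points $y=(y_i) \in (\CC^*)^k$ that annihilate every generator of $U+V$. I would then parameterize any such $y$ by a tuple $\alpha=(\alpha_i) \in \CC^k$ via $y_i = e^{2\pi\sqrt{-1}\alpha_i}$, normalized by $0 \leq \Re \alpha_i < 1$; the exponential gives a bijection between $(\CC^*)^k$ and the slab $\{\alpha : 0 \leq \Re \alpha_i < 1,\ 1 \leq i \leq k\}$.

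Next, I would translate the two families of defining relations under this reparameterization. The $V$-relation $\prod_i y_i^{f(v_i)} = e^{2\pi\sqrt{-1}f(\beta)}$ becomes $f\bigl(\sum_i \alpha_i v_i - \beta\bigr) \in \ZZ$, and demanding it for every $f \in M$ is equivalent to $\sum_i \alpha_i v_i - \beta \in N$. The $U$-relation $\prod_{i \in I}(1-y_i)=0$ for a non-face $I$ forces some $\alpha_i$ with $i \in I$ to vanish, and imposing this for every non-face $I$ is equivalent to the support $\{i : \alpha_i \neq 0\}$ being contained in $I(\sigma)$ for some maximal cone $\sigma$. Comparing with the definition of $\Bcx(\sigma;\beta)$, the admissible tuples $\alpha$ are exactly the elements of $\Bcx(\SSigma;\beta) = \bigcup_\sigma \Bcx(\sigma;\beta)$, which yields the claimed bijection.

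For the Artinian assertion, $K\CC(\SSigma;\beta)$ is Noetherian as a quotient of the Noetherian Laurent polynomial ring, and finitely generated over $\CC$. The preceding analysis exhibits at most $\sum_\sigma |\Bcx(\sigma;\beta)|$ closed points in its variety --- a finite number, because $|\Bcx(\sigma;\beta)|$ equals the index in $N$ of the sublattice generated by the ray generators of $\sigma$. Hence the ring has Krull dimension zero, and a Noetherian ring of Krull dimension zero is Artinian. The step that I expect to require the most care is the overlap bookkeeping: an $\alpha$ whose support lies in a face common to several maximal cones belongs to several $\Bcx(\sigma;\beta)$'s but determines only one point $y$ and hence one maximal ideal; taking the union (rather than a disjoint union) in the definition of $\Bcx(\SSigma;\beta)$ is precisely what makes the counting come out right, but one must verify that no closed point has been missed and that distinct elements of $\Bcx(\SSigma;\beta)$ do produce distinct $y$'s, which follows from the uniqueness of the representative with $0 \leq \Re \alpha_i < 1$.
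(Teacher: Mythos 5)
Your proposal is correct and follows essentially the same route as the paper: solve the defining relations for $y\in(\CC^*)^k$, reparameterize via $y_i=e^{2\pi\sqrt{-1}\alpha_i}$ with the normalization $0\leq\Re\alpha_i<1$, observe that the $U$-relations force the support of $\alpha$ to lie in $I(\sigma)$ for some maximal cone and the $V$-relations force $\sum_i\alpha_iv_i-\beta\in N$, and conclude Artinian-ness from Noetherian plus Krull dimension zero. The overlap bookkeeping you flag is handled correctly, and matches the paper's (tacit) treatment.
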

\begin{proof} We need to solve for $y=(y_i) \in \CC^k$
such that 
$\prod_{i \in I} (1 -y_i)= 0,$ for any set 
$I \subset \{1, \ldots, k\}$ such that   
$\sigma(I)$ is not a cone in $\Sigma,$
and 
$
\prod_{i=1}^k y_i^{f(v_i)}= e^{2 \pi \sqrt{-1} f(\beta)}
$
for all linear $f : N \to \ZZ.$

Assume that $y=(y_i)$ is a solution.
The first set of equations 
implies the existence of a maximal cone
$\sigma$ such that $y_1=1$ for all
$i \notin I(\sigma).$ 
The second set of equations shows that all $y_i$
are nonzero. There exists a unique choice 
$(\alpha_i) \in \CC^k$
such that
$y_i= e^{2 \pi \sqrt{-1} \alpha_i}$ 
and $0 \leq \Re \alpha_i <1$ if $i \in I(\sigma),$
and $\alpha_i =0$ if  $i \notin I(\sigma).$
The second set of equations implies that 
$\sum_{i=1}^k \alpha_i v_i= n + \beta,$ for some $n \in N,$
so $(\alpha_i) \in \Bcx(\SSigma;\beta).$
Conversely, the definition of $\Bx(\SSigma; \beta)$
shows that, for any $(\alpha_i)\in \Bcx(\SSigma; \beta)$
the induced $y_i= e^{2 \pi \sqrt{-1} \alpha_i}$ 
provide a solution of the required system. It is 
easy to see that this is a one-to-one correspondence. 
It follows that
the ring is indeed Artinian, since it is
Noetherian of Krull dimension zero. 
\end{proof}

We denote by $(K\CC(\Sigma;\beta))_\alpha$ 
the localization of the Artinian ring 
$K\CC(\Sigma;\beta)$
at the maximal ideal corresponding to
some $\alpha \in \Bcx (\SSigma;\beta).$
The Artinian ring
$K\CC(\Sigma;\beta)$
is a direct sum of the Artinian local
rings 
$(K\CC(\Sigma;\beta))_\alpha.$
As the following result shows, 
this direct sum decomposition 
is the counterpart of the 
direct sum decomposition of the
deformed cohomology 
$H\ZZ(\SSigma;\beta)$
into 
$S_\SSigma/Z$--submodules as
described by formula (\ref{eq:dec1}).

Let 
$S_{\SSigma}(\alpha)$ denote the kernel 
of the $S_{\SSigma}$--module homomorphism
$S_\SSigma \to \ZZ[\SSigma;\beta]$
given by $[v] \mapsto
[v] \cdot [\sum_{i=1}^k \alpha_i v_i, \alpha].$
We have that
$$
S_{\SSigma}/S_{\SSigma}(\alpha) \simeq 
S_{\SSigma} \cdot [\sum_{i=1}^k \alpha_i v_i, \alpha],$$
as $S_{\SSigma}$--modules. In particular, we can write  
the following direct sum decomposition of deformed
cohomology into
$S_\SSigma/Z$--submodules:
$$
H\ZZ(\SSigma;\beta)
\simeq
\bigoplus_{\alpha \in \Bx (\SSigma;\beta)}
S_{\SSigma}/ (Z+S_\SSigma (\alpha)).
$$
As we noted before, the ring $S_{\SSigma}/ Z$ is isomorphic 
to the Chow ring of the 
toric Deligne-Mumford stack $A^\star(\PP_\SSigma).$

\begin{theorem}\label{thm:iso1} 
There exists a $\CC$--vector space isomorphism
$$
\phi:
K\CC(\SSigma;\beta)
\simeq
H\CC(\SSigma;\beta).$$
It is 
induced by $\CC$--algebra isomorphisms
$$\phi_\alpha
(K\CC(\SSigma;\beta))_\alpha
\simeq 
(S_{\SSigma} \otimes_\ZZ \CC)/ (Z + S_{\SSigma}(\alpha)),
$$
for each element
$\alpha=(\alpha_i) \in \Bcx (\SSigma;\beta),$ induced by the identifications
$
R_i= e^{2 \pi \sqrt{-1} \,\alpha_i + [v_i]},$ for all $i,$
$1 \leq i \leq k.$
\end{theorem}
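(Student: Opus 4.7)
The plan is to construct, for each $\alpha \in \Bcx(\SSigma;\beta)$, a $\CC$-algebra homomorphism
$$\phi_\alpha : K\CC(\SSigma;\beta) \longrightarrow (S_\SSigma \otimes_\ZZ \CC)/(Z+S_\SSigma(\alpha))$$
by setting $R_i \mapsto e^{2\pi\sqrt{-1}\alpha_i}\, e^{[v_i]}$, where $e^{[v_i]}$ is interpreted as a finite Taylor sum since $[v_i]$ is nilpotent in the finite-dimensional local target. Once each $\phi_\alpha$ is shown to factor through the local summand $(K\CC(\SSigma;\beta))_\alpha$ and to be a $\CC$-algebra isomorphism onto its target, the decomposition of $K\CC(\SSigma;\beta)$ from Proposition \ref{prop:kth1} and the decomposition (\ref{eq:dec1}) of $H\CC(\SSigma;\beta)$ will combine to yield the $\CC$-vector space isomorphism $\phi=\bigoplus_\alpha \phi_\alpha$.

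For well-definedness I would check the two families of relations cutting out $K\CC(\SSigma;\beta)$. For a linear relation in $V$,
$$\phi_\alpha\!\left(\prod_{i=1}^k R_i^{g(v_i)}\right) = e^{\sum_{i=1}^k g(v_i)[v_i]}\cdot e^{2\pi\sqrt{-1}\, g(\sum_i \alpha_i v_i)} = 1\cdot e^{2\pi\sqrt{-1}\,g(\beta)},$$
because $\sum_i g(v_i)[v_i]\in Z$ vanishes in the quotient and $\sum_i \alpha_i v_i - \beta \in N$ by definition of $\Bcx(\SSigma;\beta)$. For the SR relation $\prod_{i\in I}(1-R_i)=0$ with $\{v_i\}_{i\in I}$ not contained in a cone of $\Sigma$, split $I = I_+\sqcup I_0$ with $I_+ = I\cap I(\sigma(\alpha))$ and $I_0 = I\setminus I(\sigma(\alpha))$. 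For $i\in I_+$ one has $\alpha_i \ne 0$ and $0\le \Re\alpha_i<1$, so $1-e^{2\pi\sqrt{-1}\alpha_i}$ is a nonzero scalar and $\phi_\alpha(1-R_i)$ is a unit; for $i\in I_0$ one has $\alpha_i=0$, so $\phi_\alpha(1-R_i)=-[v_i](1+[v_i]/2!+\cdots)$ is a unit multiple of $[v_i]$. The product is then a unit times $\prod_{i\in I_0}[v_i]$, which lies in $S_\SSigma(\alpha)$ because $I_0\cup I(\sigma(\alpha))\supseteq I$ still fails to lie in a cone of $\Sigma$.

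Since $\phi_\alpha(R_i)-e^{2\pi\sqrt{-1}\alpha_i}$ is nilpotent for every $i$, $\phi_\alpha$ carries the maximal ideal at $\alpha$ into the maximal ideal of the target and every other maximal ideal to the unit ideal; the idempotents supported outside $\alpha$ are thus killed and $\phi_\alpha$ descends to $(K\CC(\SSigma;\beta))_\alpha$. For the inverse, writing $r_i := e^{-2\pi\sqrt{-1}\alpha_i}R_i$ I observe that $r_i-1$ is nilpotent in $(K\CC(\SSigma;\beta))_\alpha$, and define $\psi_\alpha:[v_i]\mapsto \log r_i$ using the nilpotent Taylor expansion of $\log$ around $1$. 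That $\psi_\alpha$ respects $Z$ follows from $\sum_i g_j(v_i)\log r_i = \log\prod_i r_i^{g_j(v_i)} = \log 1 = 0$, using the identity $\prod_i R_i^{g_j(v_i)}=e^{2\pi\sqrt{-1}g_j(\beta)}$ from $V$ and $g_j(\sum_i\alpha_i v_i)\equiv g_j(\beta)\pmod\ZZ$.

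The main obstacle is verifying that $\psi_\alpha$ also kills $S_\SSigma(\alpha)$, i.e.\ that $\prod_{i\in I'}(\log r_i)^{a_i}=0$ whenever $I'=\{i:a_i>0\}$ satisfies $I'\cup I(\sigma(\alpha))$ is not contained in a cone of $\Sigma$. I would apply the SR relation of $K\CC(\SSigma;\beta)$ to $J = I'\cup I(\sigma(\alpha))$; the factors $1-R_i$ with $i\in I(\sigma(\alpha))\setminus I'$ have nonzero residues at $\alpha$ since $\alpha_i\ne 0$, hence become units in $(K\CC(\SSigma;\beta))_\alpha$, and dividing them out leaves $\prod_{i\in I'\setminus I(\sigma(\alpha))}(r_i-1)=0$. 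Multiplying by $\prod_{i\in I'\cap I(\sigma(\alpha))}(\log r_i)^{a_i}$ and replacing each surviving factor $r_i-1$ by a unit multiple of $\log r_i$ then produces the required vanishing. Once $\psi_\alpha$ is well defined, the formal identities $\exp\circ\log={\rm id}$ and $\log\circ\exp={\rm id}$ on principal-unit and nilpotent arguments respectively force $\phi_\alpha\circ\psi_\alpha$ and $\psi_\alpha\circ\phi_\alpha$ to act as the identity on the generators $[v_i]$ and $R_i$, completing the proof.
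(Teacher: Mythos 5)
Your proposal is correct and follows essentially the same route as the paper's own proof: the same assignment $R_i\mapsto e^{2\pi\sqrt{-1}\,\alpha_i+[v_i]}$, the same verification of the relations in $U$ and $V$ via nilpotence of the $[v_i]$ and the Stanley--Reisner relation applied to $I\cup I(\sigma(\alpha))$, and the same logarithmic inverse $\psi_\alpha$ killing $Z$ and $S_{\SSigma}(\alpha)$. The only differences are cosmetic (you phrase the descent to the localization via idempotents rather than by checking that elements outside the maximal ideal map to units, and you kill $S_{\SSigma}(\alpha)$ by dividing unit factors out of the SR relation instead of writing the paper's explicit product formula), so no further changes are needed.
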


\begin{proof} Let us define the ring homomorphism 
$\phi_\alpha : \CC[R_1^{\pm 1}, \ldots, R_k^{\pm 1}] \to
(S_{\SSigma} \otimes_\ZZ \CC)/ (Z + S_{\SSigma}(\alpha))$
by setting
$$
\phi_\alpha (1):=1, \ \phi_\alpha (R_i^{\pm 1}):= e^{\pm (2 \pi \sqrt{-1} \,\alpha_i + [v_i])}.$$

Firstly, 
we check that $\phi_\alpha$ actually descends to a ring homomorphism 
$K\CC(\SSigma;\beta)
\to 
(S_{\SSigma} \otimes_\ZZ \CC)/ (Z + S_{\SSigma}(\alpha)).$
For, if $f : N \to \ZZ$ is linear, then 
$$
\phi_\alpha (\prod_{i=1}^k R_i^{f(v_i)})=
e^{\sum_{i=1}^k (2 \pi \sqrt{-1} \, \alpha_i  + [v_i])f(v_i)}.
$$
But $\sum_{i=1}^k f(v_i) [v_i]$ is an element of the ideal $Z,$
and $\sum_{i=1}^k \alpha_i f(v_i)=f(n +\beta),$ for some $n \in N,$
hence 
$\phi_\alpha (\prod_{i=1}^k R_i^{f(v_i)})=
e^{2 \pi \sqrt{-1} f(\beta)},$ as required. Let
$I$ be a subset of
$\{1, \ldots, k\}$
such that the cone $\sigma(I)$ 
is not in $\Sigma,$
and let $J$ denote the nonempty set $I \setminus I(\sigma(\alpha)).$
We see that $\alpha_i=0$ for $i \in J,$ 
so
$\phi_\alpha (\prod_{i \in J} (1 -R_i))=
\prod_{i \in J} (1-e^{[v_i]}).$
But the rays generated by the vectors $v_i,$
$i \in J,$ and the rays of the cone $\sigma(\alpha)$ 
do not generate a cone in $\Sigma,$
so $\prod_{i \in J} [v_i] \cdot [\alpha]=0$
in $\ZZ[\SSigma;\beta].$ Hence,  
$\phi_\alpha (\prod_{i \in I} (1 -R_i))=
\phi_\alpha (\prod_{i \in J} (1 -R_i))=0$ in 
$(S_{\SSigma} \otimes_\ZZ \CC)/ (Z + S_{\SSigma}(\alpha)).$

Secondly, we must check that 
$\phi_\alpha$ descends further to a ring homomorphism 
$(K\CC(\SSigma;\beta) \otimes_{\ZZ} \CC)_\alpha 
\to (S_{\SSigma} \otimes_\ZZ \CC)/ (Z + S_{\SSigma}(\alpha)).$
We check this by showing that, for any Laurent polynomial 
$F(x_1^{\pm 1}, \ldots, x_k^{\pm 1})$ with complex coefficients 
which is not in the maximal ideal of the localization, i.e.
such that $F(e^{\pm 2 \pi \sqrt{-1} \, \alpha_1}, \ldots, e^{\pm 2 \pi \sqrt{-1}\, \alpha_k})
\not= 0,$ it is true that $\phi_\alpha(F(R_1^{\pm 1}, \ldots, R_k^{\pm 1}))$ is a 
unit in $(S_{\SSigma} \otimes_\ZZ \CC)/ (Z + S_{\SSigma}(\alpha)).$ But
$$
\phi_\alpha(F(R_1^{\pm 1}, \ldots, R_k^{\pm 1})=
F(e^{\pm (2 \pi \sqrt{-1} \,\alpha_1 + [v_1])}, \ldots, 
e^{\pm (2 \pi \sqrt{-1} \,\alpha_k + [v_k])}).
$$
Note that
the elements $[v_i]$ are nilpotent 
in the ring $S_{\SSigma}/ Z$
since they represent classes in the Chow ring 
$A^\star(\PP_\SSigma) \simeq S_{\SSigma}/ Z.$ 
Hence, the right hand side of the above equation is a polynomial 
with complex coefficients in the variables $[v_i]$
whose evaluation at $(0,\ldots,0)$ is non-zero, so 
its inverse 
$1/F(e^{\pm (2 \pi \sqrt{-1} \,\alpha_1 + [v_1])}, \ldots, 
e^{\pm (2 \pi \sqrt{-1} \,\alpha_k + [v_k])})$ is
also a polynomial in the variables $[v_i],$ i.e.
an element in $(S_{\SSigma} \otimes_\ZZ \CC)/ (Z + S_{\SSigma}(\alpha)).$
Therefore
$\phi_\alpha : 
(K\CC(\SSigma;\beta))_\alpha 
\to S_{\SSigma}/ (Z+S_{\SSigma}(\alpha)) \otimes_{\ZZ} \CC$
is indeed a well defined ring homomorphism.

We now define a ring homomorphism
$\psi_\alpha : S_{\SSigma} \otimes_\ZZ \CC \to
(K\CC(\SSigma;\beta))_\alpha.$
Any element in the maximal ideal of an Artinian local ring
is nilpotent. In particular, $e^{-2 \pi \sqrt{-1} \, \alpha_i} R_i$
is unipotent in 
$(K\CC(\SSigma;\beta))_\alpha.$ We set
$$
\psi_\alpha (1):=1, \ \psi_\alpha ([v_i]):=\log 
(e^{-2 \pi \sqrt{-1} \alpha_i} R_i).$$
The definition uses the usual expansion of the 
log function given by  $\log x= \sum_{j \geq 1} (-1)^{j+1}(x-1)^j/j.$

For any linear function $f : N \to \ZZ,$ we can see that
$$
\psi_\alpha (\sum_{i=1}^k f(v_i) [v_i])=
\log (e^{-2 \pi \sqrt{-1} \, \sum_{i=1}^k \alpha_i f(v_i)} 
\prod_{i=1}^k  R_i^{f(v_i)})=0,$$
since $\sum_{i=1}^k \alpha_i f(v_i)=f(n +\beta),$ for some $n \in N,$
and $\prod_{i=1}^k  R_i^{f(v_i)}=e^{2 \pi \sqrt{-1} \, f(\beta)}$
in $(K\CC(\SSigma;\beta))_\alpha.$
Let $v= \sum_{i\in I} p_i v_i,$ $p_i \in \ZZ_{> 0},$ be
an element in $N$ such that 
$[v] \cdot [\sum_{i=1}^k \alpha_i v_i,\alpha]=0$ in $\ZZ[\SSigma;\beta].$
From the definition of the product, we see that the rays of the cones
$\sigma(I)$ and $\sigma(\alpha)$ do not
generate a cone of the fan $\Sigma.$
It is enough to show that the element $\prod_{i \in I} [v_i]$
gets mapped to zero. We can write that:

\begin{equation}\nonumber
\begin{split}
& \psi_\alpha (\prod_{i \in I} [v_i])=\prod_{i \in I \cup I(\sigma(\alpha))} (1-R_i) 
\prod_{i \in I \setminus I(\sigma(\alpha))} 
\frac{\log  R_i }{1-R_i} \\
& \cdot \prod_{i \in I \cap I(\sigma(\alpha))} 
\log (e^{-2 \pi \sqrt{-1} \alpha_i} R_i)(1-R_i)^{-1}
\prod_{i \in I(\sigma(\alpha)) \setminus I} (1-R_i)^{-1}.
\end{split}
\end{equation}
The last three subfactors in the above product are all elements in 
the ring $(K\CC(\SSigma;\beta))_\alpha,$ and, since
the vectors $v_i$ with $i \in I \cup I(\sigma(\alpha))$ do not generate a 
cone of $\Sigma,$ the first subfactor is zero. We have
obtained that $\psi_\alpha ([v])=
\psi_\alpha (\prod_{i \in I} [v_i])=0,$ as 
required.

Hence, $\psi_\alpha$ descends to a morphism from 
$S_{\SSigma} \otimes_\ZZ \CC/(Z+S_{\SSigma}(\alpha))$ to 
$(K\CC(\SSigma;\beta))_\alpha.$ 
It is clear that $\phi_\alpha$ and $\psi_\alpha$ are
inverse maps, so the results follows.
\end{proof}

\begin{remark} Certainly, the isomorphism $\phi$ 
has the flavor of a Chern character.
It would be interesting to make this assertion
precise in a geometrically meaningful way. 
For a brief discussion of a similar issue, 
see \cite[Remarks 5.4, 5.5]{BH1}.
\end{remark}

\begin{example}\label{ex:ex}
Consider the stacky fan $\SSigma$
in $\ZZ^2$ determined by the
vectors $v_1=(1,0), v_2=(0,1), v_3=(-2,-1)$ and the obvious cones,
and $\beta=(a,b) \in  N \otimes \CC.$
The associated Deligne--Mumford stack is the 
stacky weighted projective space
$\PP(2,1,1).$

According to Proposition \ref{prop:kth1}, the maximum
ideals of the $K\CC(\SSigma; \beta)$ by solving
for $P=(y_1, y_2, y_3) \in (\CC^*)^3$ the following system
$$
(1-y_1)(1-y_2)(1-y_3)=0, \, y_1 y_3^{-2}=e^{2 \pi \sqrt{-1} \, a}, \, y_2 y_3^{-1}=e^{2 \pi \sqrt{-1} \, b}. 
$$
We obtain four solutions $P_i \in  (\CC^*)^3$
which for generic values $a,b \in \CC$ are distinct: 
\begin{equation*}
\begin{split}
P_1&=(1, e^{2 \pi \sqrt{-1} \, (-a/2+b)}, e^{2 \pi \sqrt{-1} \, (-a/2)}), \\
P_2&=(1, e^{2 \pi \sqrt{-1} \, (-a/2+b+1/2)}, e^{2 \pi \sqrt{-1} \, (-a/2+1/2)}), \\
P_3&=( e^{2 \pi \sqrt{-1} \, (a-2b)}, 1, e^{2 \pi \sqrt{-1} \, (-b)}), \\
P_4&=( e^{2 \pi \sqrt{-1} \, a}, e^{2 \pi \sqrt{-1} \, b},1).\\
\end{split}
\end{equation*}
Of course, this four points correspond to four possibly 
distinct elements in $\Bx(\SSigma; \beta).$

For any generic $\beta,$ the ring $K\CC(\SSigma;\beta)$ 
is semisimple of dimension equal to $4,$ which is 
the normalized volume of the polytope determined by the vectors
$v_i.$ The non-generic values of $\beta$ are hyperplanes
in $\CC^2$ where ``collisions" of some of the points $P_i$
occur.

For any $\beta=(a,b) \in \CC^2$ contained in a hyperplane of the form
$-a/2 + b \in \ZZ,$ we have that $P_1=P_3,$ while
along the hyperplanes $-a/2 +b +1/2 \in \ZZ,$ we have that
$P_2=P_3.$ 
For any $\beta=(a,b)$ contained in a hyperplane of the form
$-a/2 \in \ZZ,$ we have that $P_1=P_4,$ while
along the hyperplanes $-a/2 +1/2 \in \ZZ,$ we have that
$P_2=P_4.$ 
Moreover, for any $\beta=(a,b)$ in a hyperplane of the form
$b \in \ZZ,$ we have that $P_3=P_4.$

If we choose $\beta=(0,0),$ we have that $P_1=P_3=P_4,$
and $K\CC(\SSigma; \beta) \cong K_0 (\PP(2,1,1)) \otimes \CC
\simeq \CC[R_3^{\pm}]/(1-R_3)^3 (1+R_3).$ For $\beta=(0,1/2),$
we have that $P_1=P_4, P_2=P_3,$ and 
$K\CC(\SSigma; \beta) \cong \CC[R_3^{\pm}]/(1-R_3)^2 (1+R_3)^2.$
Of course, in both cases we have that $R_3$ is the class corresponding
to $v_3.$

\end{example}

\section{Application: Gamma series solutions to the 
better behaved GKZ hypergeometric system when
$\beta \in N \otimes \CC$} \label{sec:bbgkz}

As before, we assume that 
$\SSigma=(\Sigma, \{v_i\}_{1 \leq i \leq k})$ is
a simplicial stacky fan in the lattice $N.$
In this section, $\Aa$ denotes
the set $\{ v_1, v_2, \ldots, v_k \}
\subset N,$ 
and we allow for the possibility that 
some the elements of $\Aa$ do not
generate rays of the fan $\SSigma.$
We denote by $I(\SSigma) \subset
\{ 1, 2, \ldots k \},$ the set of indices $i$
such that $v_i$ generates a 
ray of $\SSigma.$ We also assume that there exists a group homomorphism
${\rm deg} : N \to \ZZ$ such that ${\rm deg}(v_i)=1$
for all $i, 1 \leq i \leq k,$ and that
the elements
$v_i,  1 \leq i \leq k,$ generate the lattice $N$ as 
a group.  We assume that the fan $\Sigma$ is 
induced by a regular triangulation of the convex
polytope $\Delta,$ the convex hull of 
the vectors $v_i.$ In particular, 
the support of the fan $\SSigma$ (denoted by $\Sigma$ so far
in this note) coincides with
the cone $K \subset N \otimes \RR$ over the polytope
$\Delta$ generated by the vectors $v_i.$
We assume that the fixed parameter
$\beta$ is in $N \otimes \CC.$

\begin{definition} For any parameters $\chi, \xi \in 
N \otimes \RR,$ the {\it shadow} module 
$\ZZ[\SSigma; \chi]_{\xi}$ is defined as 
the $R$-submodule of $\ZZ[\SSigma; \chi]$
generated by all $[n+\chi] \in \Sigma,$ with 
$n \in N,$ such that
$n + \chi + \epsilon \xi \in \Sigma$ for all 
sufficiently small $\epsilon >0.$
\end{definition}

The arguments used in \cite[Section 3]{Borisov} essentially
show that the following theorem holds:


\begin{proposition}\label{prop:regseq}
The ring $R=\CC[\SSigma],$ as well as the $R$-modules 
$\CC[\SSigma; \chi]$ and $\CC[\SSigma; \chi]_\xi$ 
are Cohen-Macaulay of dimension $d.$ 
Moreover,
for any basis $(g_1,\ldots, g_d )$ of $M={\rm Hom} (N, \ZZ),$ the elements
$$
Z_j=\sum_{i, i \in I(\SSigma)} g_j(v_i) [v_i]
$$
form a regular sequence in $R$ 
(and hence in $\CC[\SSigma; \chi]$ and
$\CC[\SSigma; \chi]_\xi$). 
\end{proposition}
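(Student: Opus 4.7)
The plan is to follow closely the strategy of \cite[Section 3]{Borisov}. The statement splits into two parts: (i) the Cohen--Macaulay property for $R$ and the two modules, and (ii) the regular-sequence claim for $\{Z_j\}$. Part (ii) will be reduced to (i). The key observation is that since $\deg(v_i)=1$ for all $i$, the ring $R$ carries a natural $\ZZ_{\geq 0}$-grading with $R_0=\CC$ and finite-dimensional homogeneous pieces, while $\CC[\SSigma;\chi]$ and $\CC[\SSigma;\chi]_\xi$ inherit compatible gradings. The elements $Z_j$ are homogeneous of degree one. In this graded local setting, a homogeneous system of parameters automatically acts as a regular sequence on any graded Cohen--Macaulay module of the same Krull dimension, so once (i) is established, it suffices to show that the $Z_j$ form a system of parameters.

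For the Cohen--Macaulay property of $R$, I would use the standard toric machinery. For each maximal cone $\sigma$ of $\Sigma$, the semigroup ring $\CC[\sigma \cap N]$ is the coordinate ring of a normal affine toric variety, hence Cohen--Macaulay of dimension $d$ by Hochster's theorem. The ring $R$ is assembled from the $\CC[\sigma \cap N]$ by gluing along faces, and its local cohomology at the graded maximal ideal is computed by a \v{C}ech-type complex indexed by the cones of $\Sigma$; the resulting spectral sequence degenerates because each $\CC[\sigma \cap N]$ is CM of dimension $d$. This is exactly the argument carried out in \cite{Borisov}. To extend it to $\CC[\SSigma;\chi]$, I would use the $S_\SSigma$-direct sum decomposition (\ref{decomp:ds}): each summand $S_\SSigma \cdot [\sum \alpha_i v_i, \alpha]$, viewed as an $R$-module, is a ``translate'' of the sub-structure of $R$ supported on the cones containing $\sigma(\alpha)$, and the same \v{C}ech complex yields its CM property. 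By Proposition \ref{prop:rel}, one may reduce from $\chi \in N\otimes\CC$ to $\chi \in N \otimes \RR$, keeping the combinatorics geometric. The shadow module $\CC[\SSigma;\chi]_\xi$ is handled analogously: its defining condition ``$n + \chi + \epsilon \xi \in \Sigma$ for small $\epsilon>0$'' selects, in each maximal cone $\sigma$, precisely the submodule corresponding to the relative interior of the appropriate face of $\sigma$, and again a cone-by-cone CM argument applies.

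For the SOP claim in (ii), observe that $R$ is finitely generated as an $S_\SSigma$-module (where $S_\SSigma \subset R$ is the Stanley--Reisner subring generated by the $[v_i]$), and the two modules are likewise finitely generated over $S_\SSigma$ (with one generator per element of $\Bcx(\SSigma;\chi)$ coming from the decomposition). Hence it suffices to show that $S_\SSigma/(Z_1,\ldots,Z_d)$ is finite-dimensional. Any prime of $S_\SSigma$ lies in a coordinate subring $\CC[[v_i] : i \in I(\sigma)]$ for some maximal cone $\sigma$; restricted there, the $Z_j$ are linear forms with coefficient matrix $(g_j(v_i))_{j,\,i \in I(\sigma)}$, which is invertible because $\{v_i : i \in I(\sigma)\}$ is a basis of $N \otimes \RR$ (the cone is $d$-dimensional and simplicial). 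Thus the $Z_j$ cut out the origin on every coordinate subring, so $S_\SSigma/(Z_j)$ is Artinian. The main technical obstacle is the CM verification for the deformed and shadow modules: one must check that the face-gluing used to build $R$ from the pieces $\CC[\sigma \cap N]$ remains compatible with the $\chi$-deformation and with the asymptotic interior condition, so that the \v{C}ech spectral sequence collapses uniformly across summands. Proposition \ref{prop:rel} effectively disposes of the complex-parameter complication, and after that reduction Borisov's argument applies essentially verbatim.
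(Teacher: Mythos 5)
Your proposal is correct and takes essentially the same route as the paper, which in fact offers no proof beyond the remark that ``the arguments used in \cite[Section 3]{Borisov} essentially show'' the result; your reconstruction (Hochster plus a cone-by-cone local-cohomology argument for Cohen--Macaulayness, finiteness over $S_\SSigma$ to get a homogeneous system of parameters, and the standard fact that an h.s.o.p.\ is a regular sequence on a graded Cohen--Macaulay module) is a faithful filling-in of that reference. The only point worth flagging is that the degeneration of your \v{C}ech-type spectral sequence needs not just the Cohen--Macaulayness of each $\CC[\sigma\cap N]$ but also the combinatorial input that $\Sigma$ comes from a regular, hence shellable, triangulation of $\Delta$, which is exactly the hypothesis Borisov's argument exploits.
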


\begin{corollary}\label{cor:mcm}
The quotients  
$
H\CC(\SSigma; 0)=
\CC[\SSigma]/Z \cdot \CC[\SSigma],
H\CC(\SSigma; \chi)=
\CC[\SSigma;\chi]/Z \cdot \CC[\SSigma; \chi],$ and 
$\CC[\SSigma;\chi]_\xi /Z \cdot \CC[\SSigma; \chi]_\xi,$
have dimension equal to the normalized volume of $\Delta$.
\end{corollary}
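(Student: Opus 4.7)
\emph{Plan.} By Proposition \ref{prop:regseq}, each of the three $R$--modules $M\in\{\CC[\SSigma],\,\CC[\SSigma;\chi],\,\CC[\SSigma;\chi]_\xi\}$ is Cohen--Macaulay of Krull dimension $d$, and $(Z_1,\ldots,Z_d)$ is a homogeneous regular sequence of linear forms on it. Consequently the quotient $M/Z\cdot M$ is Artinian, hence finite-dimensional over $\CC$. The plan is to read off its dimension from the graded Hilbert series: each $M$ carries a natural $\ZZ$--grading with $\deg[v_i]=1$ and $\deg[n+\chi]:=\deg(n)$ on basis elements (and $\deg[n]:=\deg(n)$ in the case of $R$), compatible with the $R$--action; the $Z_j$ then have degree one, and the standard graded-CM identity gives
$$
\dim_{\CC}\bigl(M/Z\cdot M\bigr)\;=\;\lim_{t\to 1}(1-t)^{d}\,H_M(t).
$$

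The remaining task is to compute this limit in each case and verify that the answer is ${\rm vol}(\Delta)$. For $M=R=\CC[\SSigma]$ the degree-$m$ piece has $\CC$--basis $\{[n]:n\in N\cap K,\ \deg(n)=m\}$, of cardinality the Ehrhart function $L_\Delta(m)$, a polynomial of degree $d-1$ in $m$ with leading coefficient ${\rm vol}(\Delta)/(d-1)!$; a direct evaluation then yields $\lim_{t\to 1}(1-t)^d H_R(t)={\rm vol}(\Delta)$. For $M=\CC[\SSigma;\chi]$ the degree-$m$ piece is indexed by lattice points of fixed degree in the translated cone $K-\chi$, a count that differs from $L_\Delta(m)$ only in lower-order terms because $K$ and $K-\chi$ share a common recession cone. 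For the shadow module $\CC[\SSigma;\chi]_\xi$ the omitted generators $[n+\chi]$ (those with $n+\chi\in\Sigma$ but $n+\chi+\epsilon\xi\notin\Sigma$ for small $\epsilon>0$) lie in a finite union of proper facial strata of $\Sigma$, so their count grows polynomially of degree at most $d-2$ in $m$. In all three cases the leading coefficient of $H_M$ is therefore the same, and the corollary follows.

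The main technical point I anticipate is the asymptotic comparison hidden in the second step: one must justify that translating $K$ by $-\chi$ and deleting the shadow's boundary elements each change the Hilbert function only by strictly lower-order terms. Both statements are intuitively clear—neither operation affects the top-dimensional lattice density inside $K$—and formally they reduce to Ehrhart-type estimates for $\Delta$ and for its proper faces. Everything else is standard graded commutative algebra once Proposition \ref{prop:regseq} is in hand.
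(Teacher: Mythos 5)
Your argument is correct and is exactly the standard multiplicity computation that the paper leaves implicit: the corollary is stated without proof as an immediate consequence of Proposition \ref{prop:regseq} (deferring to \cite{Borisov}), and the intended reasoning is precisely that a degree-one regular sequence of length $d$ on a graded Cohen--Macaulay module of dimension $d$ yields a quotient of dimension $\lim_{t\to 1}(1-t)^d H_M(t)$, which the Ehrhart-type estimates identify with ${\rm vol}(\Delta)$ in all three cases. Your flagged technical point (that translating $K$ by $-\chi$ and discarding the boundary elements of the shadow module only perturb the Hilbert function in degrees $O(m^{d-2})$) is the right thing to check and your justification of it is sound.
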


We now begin discussing the main theme of this section, 
namely the construction of $\Gamma$-series solutions 
to the better behaved GKZ system for $\beta \in N \otimes \CC.$

\begin{definition}\label{def:setL}
For each $\alpha=(\alpha_i) \in  \Bcx (\SSigma;\beta)$
and any $v \in K \cap N,$ we define the set $L(\alpha,v) \subset \CC^n$
of collections $l=(l_i)_{1 \leq i \leq k}$ such that
$$
\sum_{i=1}^k l_i v_i = \beta - v$$ 
and $l_i - \alpha_i$ are integers for all
$i, 1 \leq i \leq k.$
\end{definition}

Since the elements
$v_i,  1 \leq i \leq k,$ generate the lattice $N$ as 
a group, we see 
that, for a given $v \in K,$ any solution to the equation
$\sum_{i=1}^k l_i v_i = \beta -v$ such that
$l_i \in \ZZ$ if $i \notin I(\sigma)$ 
for a maximal cone $\sigma$
in $\Sigma,$ belongs to the
set $L(\alpha,v)$ for some $\alpha \in 
\Bcx (\sigma;\beta).$

We now choose a small enough $\delta >0$ such that
the results of Proposition \ref{prop:rel} and
Corollary \ref{cor:rel} hold. In particular, we have
an $R$--module isomorphism between 
$\CC[\SSigma;\beta]$ and 
$\CC[\SSigma;\Re \beta + \delta \Im \beta],$
and an induced triple one-to-one correspondence 
$\alpha \leftrightarrow \alpha_\delta \leftrightarrow c(\alpha_\delta)$ 
among the sets
$\Bcx (\SSigma;\beta) \subset \CC^k,$
$\Bcx (\SSigma;\Re \beta + \delta \Im \beta) \subset \RR^k$ and 
$\Bx (\SSigma;\Re \beta + \delta \Im \beta) \subset 
N \otimes \RR$ such that the corresponding support 
cones coincide:
$\sigma(\alpha)=\sigma(\alpha_\delta)=\sigma(c(\alpha_\delta)).$
In what follows, it will be convenient 
to use the notation 
$$\beta_\delta:= 
\Re \beta + \delta \Im \beta \in N \otimes \RR.$$

For a given $x=(x_1, \ldots, x_k)$ in $(\CC^\star)^k,$ and $v \in K \cap N,$
we introduce the formal $\Gamma$--series $\Phi_v(x)$
with values in the completion 
$\CC [\SSigma; \beta_\delta]^{\wedge}$
of the 
graded $R$--module $\CC [\SSigma; \beta_\delta]$
as follows
\begin{equation}\label{def:gamma1}
\Phi_v (x):=
\sum_{c(\alpha_\delta) \in \Bx  (\Sigma;\beta_\delta)} \sum_{l \in L(\alpha,v)}
\prod_{i=1}^k  \frac{x_i^{l_i + D_i}}
{\Gamma(l_i+ D_i+1)} \cdot [c(\alpha_\delta)],
\end{equation}
where
$$
D_i := [v_i] \ \text{if} \  i \in I(\SSigma), \ \text{and} \
D_i:=0, \ \text{otherwise,}$$ 
and 
$$
x_i^{l_i+D_i}:= e^{(l_i+D_i) (\log |x_i| + \sqrt{-1} \arg x_i)},$$
for a choice of $(\arg x_1, \ldots, \arg x_k) \in \RR^k.$

According to \cite[Proposition 2.12]{BH2}, for 
each $v \in K \cap N,$ the formal series 
$\Phi_v (x)$ 
induces a well defined map 
from a non-empty 
open set $U_{\Sigma}$ 
in $\CC^k$ to the completion $\CC[\SSigma, \beta_\delta]^{\wedge}$ of the
graded ring $\CC[\SSigma, \beta_\delta].$ 


\begin{lemma} \label{lemma:l1}
\begin{itemize}
\item[i)]
For any $c(\alpha_\delta) \in \Bx (\SSigma;\beta_\delta),$ $v \in \Sigma \cap N,$ 
$l \in L(\alpha,v)$ and $x \in \CC^k,$ the product
$$
\prod_{i=1}^k  \frac{x_i^{l_i + D_i}}
{\Gamma(l_i+ D_i+1)} \cdot 
[c(\alpha_\delta)],
$$
belongs to the $R$-submodule
$\CC [\SSigma; \beta_\delta]_{\Re \beta}
\subset \CC [\SSigma; \beta_\delta].$  

\item[ii)] For any element $[w] \in 
\CC [\SSigma; \beta_\delta]_{\Re \beta},$ there exists 
a maximal cone $\sigma$ of $\SSigma,$ a unique element
$c(\alpha_\delta) \in \Bx (\sigma; \beta_\delta),$ and elements 
$v \in K \cap N,$  $l=(l_i) \in L(\alpha,v),$
such that $l_i =0$ if $i \notin I(\sigma),$ and 
$$
w \in c(\alpha_\delta) +  \sum_{i, l_i \in \ZZ_{< 0}} v_i + 
\sum_{i \in I(\sigma)} \ZZ_{\geq 0} v_i.
$$
\end{itemize}
\end{lemma}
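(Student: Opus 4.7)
For part (i) I would start with the Taylor expansion of the Gamma factor: $1/\Gamma(l_i+D_i+1)$, regarded as a formal series in $D_i=[v_i]$, has nonzero constant term $1/\Gamma(l_i+1)$ unless $l_i\in\ZZ_{<0}$, in which case it starts at order one. Combining $l_i-\alpha_i\in\ZZ$ with $0\leq\Re\alpha_i<1$ shows that $l_i\in\ZZ_{<0}$ forces $\alpha_i=0$, hence $i\in I(\SSigma)\setminus I(\sigma(\alpha))$. Applying the expanded product to $[c(\alpha_\delta)]$ produces a formal linear combination of monomials $[c(\alpha_\delta)+\sum_i m_iv_i]$, where $m=(m_i)$ satisfies $m_i\geq 0$, $m_i\geq 1$ whenever $l_i\in\ZZ_{<0}$, and where non-vanishing of the monomial in $\CC[\SSigma;\beta_\delta]$ forces $\{v_i:m_i>0\}\cup I(\sigma(\alpha))$ to lie in a common cone $\tau$ of $\Sigma$.

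The key step is to show that each such non-vanishing $[c(\alpha_\delta)+\sum m_iv_i]$ is a shadow generator of $\CC[\SSigma;\beta_\delta]_{\Re\beta}$. Substituting the real part of the defining equation $\sum l_iv_i=\beta-v$ into the perturbed element yields
\[
c(\alpha_\delta)+\sum_i m_iv_i+\epsilon\Re\beta \;=\; \bigl(c(\alpha_\delta)+\epsilon v\bigr)+\sum_i (m_i+\epsilon\Re l_i)\,v_i.
\]
The first summand lies in $K=\Sigma$ as a sum of two elements of $K$. For the coefficients of the second summand I argue case by case for small $\epsilon>0$: the strictly positive $(\alpha_\delta)_i$ inside $c(\alpha_\delta)$ dominates any small $\epsilon\Re l_i$ when $i\in I(\sigma(\alpha))$; the integer $m_i\geq 1$ dominates $\epsilon|\Re l_i|$; when $m_i=0$ and $i\in I(\SSigma)\setminus I(\sigma(\alpha))$, non-vanishing of $1/\Gamma$ forces $l_i\in\ZZ_{\geq 0}$, so $\Re l_i\geq 0$; finally, for $i\notin I(\SSigma)$ the awkward contribution $\epsilon\Re l_iv_i$ is absorbed into $\epsilon v\in K$ via a non-negative decomposition $v=\sum_j t_jv_j$ adapted to a cone of $\Sigma$. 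Since $\CC[\SSigma;\beta_\delta]_{\Re\beta}$ is closed under sums and $R$-multiplication, the entire product lies in it.

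For part (ii), starting from $[w]$ with $w+\epsilon\Re\beta\in\Sigma$ for all small $\epsilon>0$, I choose a maximal cone $\sigma$ of $\Sigma$ such that $w+\epsilon\Re\beta\in\sigma$ for all small $\epsilon>0$. The standard toric box decomposition inside the simplicial cone $\sigma$ produces the unique $c(\alpha_\delta)\in\Bx(\sigma;\beta_\delta)$ together with integers $k_i\geq 0$ for $i\in I(\sigma)$ satisfying $w-c(\alpha_\delta)=\sum_{i\in I(\sigma)}k_iv_i$. Then $(v,l)$ is constructed by parametrizing $l_i=\alpha_i+n_i\in\CC$ for $i\in I(\sigma)$ and $l_i=0$ for $i\notin I(\sigma)$, choosing $n_i$ to be very negative integers for $i\in I(\sigma(\alpha))$ and for $i\in I(\sigma)\setminus I(\sigma(\alpha))$ with $k_i\geq 1$, and $n_i=0$ for $i\in I(\sigma)\setminus I(\sigma(\alpha))$ with $k_i=0$. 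Then $v:=\beta-\sum l_iv_i\in N$ and, for the $n_i$'s chosen sufficiently negative, lies deep inside $K$, hence $v\in K\cap N$; the constraint $l_i\in\ZZ_{<0}\Rightarrow k_i\geq 1$ holds by construction, and setting $m_i:=k_i-\mathbf{1}_{\{l_i\in\ZZ_{<0}\}}\geq 0$ yields the required decomposition of $w-c(\alpha_\delta)$.

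The main obstacle is the coefficient-positivity case analysis in part (i), particularly the absorption of small negative contributions $\epsilon\Re l_iv_i$ from indices $i\notin I(\SSigma)$ into the $\epsilon v$ term. The resolution hinges on the freedom to decompose $v\in K\cap N$ as a non-negative combination of $v_j$'s adapted to the ambient cone $\tau$ containing $c(\alpha_\delta)+\sum m_iv_i$, after which the membership of the perturbed point in $K=\Sigma$ follows from direct inspection.
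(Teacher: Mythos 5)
Part (i) of your proposal is essentially sound, though it takes a slightly different route from the paper's: you verify the shadow-generator condition monomial by monomial, whereas the paper observes that the whole product lies in the $R$-submodule generated by the single lowest-order term $[w]$ with $w=\sum_i(\alpha_\delta)_iv_i+\sum_{i,\,l_i\in\ZZ_{<0}}v_i$, and checks the condition only for that $[w]$; since the shadow module is by definition an $R$-submodule, this one check suffices. Your ``main obstacle'' --- indices $i\notin I(\SSigma)$ contributing $\epsilon\,\Re l_i\,v_i$ with $\Re l_i<0$ --- is in fact vacuous: for such $i$ one has $D_i=0$ and $\alpha_i=0$, hence $l_i\in\ZZ$, and $l_i<0$ makes the scalar factor $1/\Gamma(l_i+1)$ vanish and kills the whole product. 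The ``absorption into $\epsilon v$'' you propose for that case is not a valid argument in general (the non-negative decomposition of $v$ need not carry a $v_i$-coefficient large enough to absorb $\epsilon|\Re l_i|$), but fortunately it is never needed.

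Part (ii) has a genuine gap. Your construction of $v=\beta-\sum_il_iv_i$ by taking the $n_i$ ``very negative'' does not guarantee $v\in K$: writing $\sum_i\alpha_iv_i=n'+\beta$ with $n'\in N$, you get $v=-n'-\sum_{i\in J}n_iv_i$, where $J$ is the set of indices you perturb, and $-n'$ need not lie in $K+\sum_{i\in J}\RR_{\geq0}v_i$ no matter how negative the $n_i$ are; in the extreme case $\alpha=0$ and all $k_i=0$ you are forced to take $v=\beta$, which need not lie in $K$. The deeper symptom is that your argument uses the hypothesis $[w]\in\CC[\SSigma;\beta_\delta]_{\Re\beta}$ only to select $\sigma$, yet that hypothesis is essential: the conclusion of (ii) expresses $w+\epsilon\Re\beta$ as $\epsilon v$ plus a non-negative combination of the $v_i$ for small $\epsilon$, hence forces $[w]$ to be a shadow generator, so the statement is simply false for general $[w]\in\CC[\SSigma;\beta_\delta]$. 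The paper's proof avoids both problems: it takes $v$ to be the unique element of $\Bx(\sigma;0)$ with $n\in-v+\sum_{i\in I(\sigma)}\ZZ v_i$, so that $v\in\sigma\cap N\subset K\cap N$ automatically and $\beta-v=\sum_{i\in I(\sigma)}l_iv_i$ with $l\in L(\alpha,v)$, and then uses the shadow condition $w+\epsilon\Re\beta\in\sigma$ to deduce that the integers $p_i$ in $w=c(\alpha_\delta)+\sum_{i\in I(\sigma)}p_iv_i$ satisfy $p_i\geq1$ whenever $l_i\in\ZZ_{<0}$, which is exactly the required decomposition.
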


\begin{proof} {\it i)}
Note first that the product is zero, unless
there exists a maximal cone $\sigma$ 
such that $I(\sigma(\alpha_\delta)) \subset I(\sigma)$ 
and $I(\sigma)$ contains all
$i$ with $l_i \in \ZZ_{< 0}.$ 
Let $\sigma$ be 
such a maximal cone. 
For any $i, i \notin I(\sigma),$ 
we have that $l_i \in \ZZ_{\geq 0}.$

The product  
lies in the the $R$-submodule of 
$\CC [\SSigma; \beta_\delta]$ 
generated by $[w]$ where 
$$
w = \sum_{i=1}^k (\alpha_\delta)_i v_i + \sum_{i, l_i \in \ZZ_{<0}} v_i=
n +\beta_\delta,$$ 
for some $n \in N.$ Since 
$l_i - \alpha_i$ are integers for all
$i,$ and $\sigma(\alpha)=\sigma(\alpha_\delta),$
it is certainly the case that 
$l_i \in \ZZ$ if and only if $i \notin I(\sigma(\alpha_\delta)).$

For any $\epsilon>0$ we can write that
$$
w +\epsilon (\Re \beta -v)  = w  +
\epsilon \sum \Re l_i v_i 
$$
$$ 
=\sum_{i, i \in I(\sigma(\alpha_\delta))} ((\alpha_\delta)_i+\epsilon \Re l_i) v_i
+\sum_{i, l_i \in \ZZ_{<0}} (1+\epsilon l_i) v_i
+\sum_{i, l_i \in \ZZ_{\geq 0}} \epsilon l_i v_i.
$$
But $0< (\alpha_\delta)_i < 1$
for any $i \in I(\sigma(\alpha_\delta)),$
so we indeed have that $w + \epsilon (\Re \beta -v)$ belongs to the 
cone $\Sigma$ for sufficiently small $\epsilon >0.$ Since
$v \in \Sigma,$ we also have that $w + \epsilon \Re \beta 
\in \Sigma,$ so 
$[w] \in \CC [\SSigma; \beta_\delta]_{\Re \beta}.$

{\it ii)} For any $[w] \in 
\CC [\SSigma; \beta_\delta],$ there exists
$n \in N$ such that $w=n + \beta_\delta.$
We choose a maximal cone $\sigma$ 
such that $w + \epsilon \beta_\delta \in \sigma$
for all sufficiently small $\epsilon >0.$
As explained in Remark \ref{rem:11}
since $\beta_\delta \in N \otimes \RR,$
there exists a uniquely defined $c(\alpha_\delta)
=\sum_{i=1}^k (\alpha_\delta)_i v_i \in 
\Bx (\SSigma;\beta)$ contained in $\sigma$ such 
that
\begin{equation}\label{eq:w}
w= n+ \beta_\delta=
c(\alpha_\delta) + \sum_{i \in I(\sigma)} 
p_i v_i = \sum_{i \in I(\sigma)} ((\alpha_\delta)_i + p_i) v_i,
\end{equation}
with $p_i \in \ZZ_{\geq 0}.$ Under the correspondence 
between $\alpha_\delta \in \Bcx (\SSigma; \beta_\delta) \subset \RR^k$ and 
$\alpha \in \Bcx (\SSigma; \beta) \subset \CC^k,$ we have that 
$(\alpha_\delta)_i = \{ \Re \alpha_i + \delta \Im \alpha_i\}.$ 
In particular, this means that
$$
n+\beta \in \sum_{i \in I(\sigma)} (\alpha_i +\ZZ) \, v_i
$$
For any $n \in N,$ 
there exists a unique $v \in \Bx (\sigma;0) \subset N,$
such that $n \in  - v + \sum_{i \in I(\sigma)} \ZZ v_i.$ We conclude that
we are able to write
$$
\beta-v = \sum_{i=1}^k l_i v_i,
$$
with $l=(l_i) \in L(\alpha,v)$ and $l_i =0$ if $i \notin I(\sigma).$ 
By comparing this expression with equation (\ref{eq:w}),
it is easy to see that
$w + \epsilon \beta_\delta \in \sigma$
for all sufficiently small $\epsilon >0,$ implies that
$p_i \in \ZZ_{>0}$ for all $i$ with $l_i \in \ZZ_{<0},$ 
and this ends the proof of the lemma.
\end{proof} 
As the first part of this lemma indicates, the series 
$\Phi_v (x)$ takes values 
in the completion 
$(\CC [\SSigma; \beta_\delta]_{\Re \beta})^{\wedge}$
of the 
graded $R$--module $\CC [\SSigma; \beta_\delta]_{\Re \beta}.$

\begin{theorem}
For any linear map
$$h : \CC [\SSigma; \beta_\delta]_{\Re \beta}
/ (Z_1, \ldots, Z_d) \CC [\SSigma; \beta_\delta]_{\Re \beta}
\to \CC,$$ 
the sequence of functions 
$(h \circ \Phi_{v}(x_1,\ldots,x_v))_{v \in K \cap N}$ satisfies the better 
behaved GKZ hypergeometric equations (\ref{eq:gkz1}) and (\ref{eq:linear})
on $U_\SSigma$ 
corresponding to the set $\mathcal A$ and parameter $\beta \in N \otimes \CC.$
\end{theorem}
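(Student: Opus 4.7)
The plan is to verify the two families of equations (\ref{eq:gkz1}) and (\ref{eq:linear}) by termwise differentiation of the series $\Phi_v(x)$, and then observe that any ``error terms'' land in the ideal $(Z_1, \ldots, Z_d) \CC[\SSigma;\beta_\delta]_{\Re\beta}$ and are therefore killed by $h$. Throughout, I will use that $\Phi_v(x)$ takes values in the completion $(\CC[\SSigma;\beta_\delta]_{\Re\beta})^\wedge$ (by Lemma \ref{lemma:l1}(i)), and that applying $h$ factors through the finite-dimensional quotient from Corollary \ref{cor:mcm}.

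For the first family (\ref{eq:gkz1}), I would differentiate $\Phi_v(x)$ termwise with respect to $x_i$. Using the Gamma recursion $\Gamma(l_i+D_i+1)=(l_i+D_i)\Gamma(l_i+D_i)$, one obtains
\[
\partial_i \Phi_v(x) = \sum_{c(\alpha_\delta)} \sum_{l \in L(\alpha,v)} \frac{x_i^{l_i+D_i-1}}{\Gamma(l_i+D_i)} \prod_{j \neq i} \frac{x_j^{l_j+D_j}}{\Gamma(l_j+D_j+1)} \cdot [c(\alpha_\delta)].
\]
The shift map $l \mapsto l - e_i$ is a bijection from $L(\alpha,v)$ to $L(\alpha,v+v_i)$, since $\sum_j l_j v_j = \beta - v$ iff $\sum_j (l_j-\delta_{ij}) v_j = \beta - (v+v_i)$ and the congruence $l_j-\alpha_j \in \ZZ$ is preserved. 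Under this bijection the displayed sum matches the defining series for $\Phi_{v+v_i}(x)$ exactly, giving $\partial_i \Phi_v = \Phi_{v+v_i}$; here no ``error'' appears, so this identity already holds before applying $h$.

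For the second family (\ref{eq:linear}), I would compute
\[
\sum_{i=1}^k g(v_i) x_i \partial_i \Phi_v(x) = \sum_{c(\alpha_\delta), l}\Bigl(\sum_{i=1}^k g(v_i)(l_i+D_i)\Bigr) \prod_j \frac{x_j^{l_j+D_j}}{\Gamma(l_j+D_j+1)} \cdot [c(\alpha_\delta)].
\]
Then I would split $\sum_i g(v_i)(l_i+D_i) = \sum_i g(v_i) l_i + \sum_i g(v_i) D_i$. The first piece equals $g\bigl(\sum_i l_i v_i\bigr) = g(\beta-v)$ (a scalar) by the defining constraint of $L(\alpha,v)$, and produces the term $g(\beta-v)\Phi_v$. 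The second piece equals $\sum_{i \in I(\SSigma)} g(v_i)[v_i]$, which is $\sum_{j=1}^d g(g_j^\dual)\, Z_j$ when $g$ is expanded in the dual basis $\{g_1,\ldots,g_d\}$, hence lies in the ideal $(Z_1,\ldots,Z_d)$. Consequently the corresponding summand lies in $Z \cdot \CC[\SSigma;\beta_\delta]_{\Re\beta}$ term-by-term and is annihilated by $h$.

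The main technical obstacle is not the combinatorial identities above but justifying the analytic manipulations: the termwise differentiation must match the honest derivative on $U_\SSigma$ (this follows from the uniform convergence established through \cite[Proposition 2.12]{BH2}), and the composition $h \circ \Phi_v$ must be well-defined as a function on $U_\SSigma$. For the latter, I would use that $\CC[\SSigma;\beta_\delta]_{\Re\beta}/(Z_1,\ldots,Z_d)\CC[\SSigma;\beta_\delta]_{\Re\beta}$ is finite-dimensional by Corollary \ref{cor:mcm}, so $h$ pulls back to a bounded linear functional on each graded piece of the completion, and the scalar series $h\circ \Phi_v$ inherits convergence from that of $\Phi_v$ in the completed module. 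Once these analytic points are secured, the two computations above complete the verification of (\ref{eq:gkz1}) and (\ref{eq:linear}).
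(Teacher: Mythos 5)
Your proposal is correct and follows essentially the same approach as the paper's proof: verify (\ref{eq:gkz1}) exactly at the level of module-valued series via the shift bijection $L(\alpha,v) \to L(\alpha,v+v_i)$, and verify (\ref{eq:linear}) by splitting $\sum_i g(v_i)(l_i + D_i)$ into the scalar $g(\beta - v)$ and the term $\sum_{i\in I(\SSigma)} g(v_i)[v_i]$, which is a $\ZZ$-linear combination of the $Z_j$ and hence killed by $h$. The only difference is that you spell out the analytic justification (termwise differentiation, convergence of $h\circ\Phi_v$) in a bit more detail, which the paper leaves to the cited \cite[Proposition 2.12]{BH2}.
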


\begin{proof} For the set of equations (\ref{eq:gkz1}), 
note that taking the $j$-th partial derivative 
of the summand over $(l_i)_{1 \leq i \leq k} \in
L(\alpha,v)$ in the series $\Phi_v,$
replaces it by a summand over 
$(l_i - \delta^i_j)_{1 \leq i \leq k}.$ It is then
enough to note that 
$$
L (\alpha, v) - (\delta^i_j)_{1 \leq i \leq k} = L(\alpha, v + v_j).
$$

For the set of equations (\ref{eq:linear}), our convention
that
$D_j=0$ if $j \notin I(\SSigma),$ 
implies that
$$
\sum_{i=1}^k g(v_i) D_i=
\sum_{i, i \in I(\SSigma)} g(v_i) D_i,
$$
for any $g \in M={\rm Hom} (N, \ZZ).$ Hence
$$
\big( - g(\beta - v) + \sum_{i=1}^k g(v_i) x_i \partial_i \big) \Phi_{v}
=\big(\sum_{i=1}^k g(v_i) D_i \big) \Phi_{v}.
$$
The previous lemma shows that the series $\Phi_v$ 
takes values in $\CC [\SSigma; \beta_\delta]_{\Re \beta},$ so the
result follows after we
observe that $\sum_{i=1}^k g(v_i) D_i$ is a linear 
combination of the $Z_j$'s.
\end{proof}

Accordingly,
it is convenient to view each 
$\Gamma$--series 
$\Psi_v(x), v \in K \cap N,$ as a map
from the non-empty open set $U_{\SSigma}$ in $\CC^k$ to the finite dimensional vector space 
$\CC [\SSigma; \beta_\delta]_{\Re \beta}/(Z_1, \ldots, Z_d) \CC [\SSigma; \beta_\delta]_{\Re \beta}.$ 
We now prove the required linear independence result.

\begin{proposition}
If $h: \CC [\SSigma; \beta_\delta]_{\Re \beta}
/ (Z_1, \ldots, Z_d) \CC [\SSigma; \beta_\delta]_{\Re \beta}
\to \CC$
is a linear map such that $h \circ \Phi_{v} (x)=0,$ for all $v \in K \cap N$
and any $x \in U_\SSigma,$
then $h=0.$
\end{proposition}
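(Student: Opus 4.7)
The plan is to prove the contrapositive via a spanning argument: we show that the $V$-valued coefficients of $\Phi_v(x)$, as $v$ ranges over $K\cap N$, together span the finite-dimensional space $V:=\CC[\SSigma;\beta_\delta]_{\Re\beta}/(Z_1,\ldots,Z_d)\CC[\SSigma;\beta_\delta]_{\Re\beta}$; by Corollary \ref{cor:mcm}, $\dim V$ equals the normalized volume of $\Delta$. The vanishing of $h\circ\Phi_v(x)$ for every $v$ and $x\in U_\SSigma$ will then force $h$ to vanish on a spanning set and hence on all of $V$.

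The first step is to expand each summand
\[
\prod_{i=1}^k \frac{x_i^{l_i+D_i}}{\Gamma(l_i+D_i+1)}\cdot[c(\alpha_\delta)]
\]
of $\Phi_v(x)$ as a finite $V$-valued linear combination of multi-valued functions of the form $\prod_i x_i^{l_i}(\log x_i)^{m_i}$ on $U_\SSigma$. This truncation is legitimate because each $D_i=[v_i]$ is nilpotent in $S_\SSigma/Z\simeq A^\star(\PP_\SSigma)$, so both $x_i^{D_i}=e^{D_i\log x_i}$ and $1/\Gamma(l_i+D_i+1)$ contribute only finitely many terms after reduction modulo $(Z_1,\ldots,Z_d)$. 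Moreover, for each $l$ occurring in $\Phi_v(x)$ the corresponding $\alpha\in\Bcx(\SSigma;\beta)$ is uniquely determined by $l$ (via $\alpha_i\equiv l_i\pmod{\ZZ}$ and $0\le\Re\alpha_i<1$), so distinct pairs $(\alpha,l)$ contribute to distinct monomials. Linear independence of the functions $\prod_i x_i^{l_i}(\log x_i)^{m_i}$ on $U_\SSigma$ then forces $h$ to vanish on each of these $V$-valued coefficients individually.

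The second step is to match these coefficients with the generators of $V$ supplied by Lemma \ref{lemma:l1}(ii). The key computation is that $1/\Gamma(l_i+D_i+1)$ has a simple zero at $D_i=0$ exactly when $l_i\in\ZZ_{<0}$, producing the extra factor $D_i=[v_i]$, while for $l_i\notin\ZZ_{<0}$ it is a nonzero constant at $D_i=0$. Hence, for the summand of $\Phi_v(x)$ indexed by a datum $(\sigma,\alpha,v,l)$ with $l_i=0$ for $i\notin I(\sigma)$, the coefficient of $\prod_i x_i^{l_i}$ (with no $\log$'s) is a nonzero scalar times the class $[c(\alpha_\delta)+\sum_{l_i<0}v_i]$, and the higher $\log$-degree coefficients pick up the further classes $[c(\alpha_\delta)+\sum_{l_i<0}v_i+\sum_{i\in I(\sigma)}p_i v_i]$ for $p_i\in\ZZ_{\ge 0}$. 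As the datum $(\sigma,\alpha,v,l)$ runs through its admissible values, Lemma \ref{lemma:l1}(ii) guarantees that the classes thus produced span $V$, so $h=0$.

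The main obstacle is carrying out step two rigorously: one must verify that the Gamma-function zeros, the nilpotent action of the $D_i=[v_i]$, and the defining relations $Z_j$ genuinely combine to produce every generator in Lemma \ref{lemma:l1}(ii), with no unanticipated cancellations between summands attached to different maximal cones $\sigma$ containing a common support cone $\sigma(\alpha)$. The decomposition (\ref{eq:dec1}) of $V$ into $S_\SSigma/Z$-blocks indexed by $\alpha\in\Bcx(\SSigma;\beta)$, together with the one-to-one correspondence in Corollary \ref{cor:rel}, should reduce the verification to an analysis inside each $\alpha$-block, where the uniqueness of $\alpha$ per monomial established in step one isolates the relevant summand unambiguously.
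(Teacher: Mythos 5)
Your strategy is, at bottom, the paper's own argument read contrapositively: the paper assumes $h\neq 0$, picks a class $[w]$ of maximal degree $P$ with $h([w])\neq 0$, writes $w=c(\alpha_\delta)+\sum_{l_i\in\ZZ_{<0}}v_i+\sum_{i\in I(\sigma)}n_iv_i$ via Lemma \ref{lemma:l1}(ii), and shows that the Fourier coefficient of $x^l$ in $h\bigl(\prod_i D_i^{n_i}\Phi_v(x)\bigr)$ is nonzero. Your extraction of $(\log x_i)$--coefficients is equivalent to the paper's application of the monodromy operators $T_i=\exp(D_i)$ (and polynomials $g(T_i)$ with $g(T_i)\Phi_v=D_i\Phi_v$), and your use of the zeros of $1/\Gamma(l_i+D_i+1)$ at negative integers is the same leading-term computation. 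The one point you leave open --- ``no unanticipated cancellations'' --- is precisely what the maximal-degree device settles, and your literal claim there needs a correction: the coefficient of $\prod_ix_i^{l_i}(\log x_i)^{p_i}$ is \emph{not} a nonzero scalar times the single class $[c(\alpha_\delta)+\sum_{l_i<0}v_i+\sum p_iv_i]$, but that class plus terms of strictly higher degree arising from the higher-order terms of the expansions of $1/\Gamma(l_i+D_i+1)$ and of $e^{D_i\log x_i}$. Because the resulting coefficient matrix is triangular with nonzero diagonal for the degree filtration (equivalently, because $h$ kills everything of degree $>P$ by maximality of $P$), these corrections are harmless, and Lemma \ref{lemma:l1}(ii) then guarantees that the leading terms exhaust a basis of $\CC[\SSigma;\beta_\delta]_{\Re\beta}/Z\cdot\CC[\SSigma;\beta_\delta]_{\Re\beta}$, which completes your spanning argument. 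Cross-cone cancellations indeed do not occur, for the reason you give: the exponent $l$ determines $\alpha$, so distinct box elements feed distinct monomials.
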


\begin{proof} The proof is very similar to the proof 
of \cite[Proposition 2.19]{BH2}. We include it here since 
the context and the notation are
slightly changed.
Assume that 
there exists some 
$x \in \CC [\SSigma; \beta_\delta]_{\Re \beta}
/ (Z_1, \ldots, Z_d) \CC [\SSigma; \beta_\delta]_{\Re \beta}$
such that $h(x)\not=0.$ Let $P$ be the largest degree
of such an element. Furthermore, we choose  
a representative $[w]$ mod 
$Z \CC [\SSigma; \beta_\delta]_{\Re \beta}$
for $[x].$ According to part $ii)$ 
of Lemma \ref{lemma:l1}, there exists 
a maximal cone $\sigma$ and a unique 
$c(\alpha_\delta) \in \Bx (\sigma; \beta_\delta)$
an  element $v \in K \cap N,$ and a relation $l \in L(\alpha,v)$
such that
\begin{equation}\label{eql}
w= c(\alpha_\delta)+ \sum_{i, l_i \in \ZZ_{<0}} v_i + \sum_{i \in I(\sigma)} n_i v_i 
\end{equation}
with $n_i \in \ZZ_{\geq 0}.$ 

For each $i$ such that $v_i$ with $i \in I(\SSigma),$ 
consider the loop of the form $x_i(t)= \epsilon \exp(2 \pi \sqrt{-1} \, t), 
x_j(t)=\epsilon,$
$j\not=i,$ $0 \leq t \leq 1,$ 
with $\epsilon >0 $ a small real positive number. 
The action of the induced monodromy operator $T_i$
on the $\Gamma$-series $\Phi_{v}$ is given by $\exp(D_i)$.
Since $D_i$ is nilpotent in 
$\CC [\SSigma; \beta_\delta]_{\Re \beta}
/ Z \cdot \CC [\SSigma; \beta_\delta]_{\Re \beta},$
there is a polynomial
$g(T_i)$ such that $g(T_i) \Phi_{v} = D_i \Phi_{v}$, for every $i \in I(\SSigma).$
Hence 
$$
\prod_i g(T_i)^{n_i} \Phi_{v}(x)=
\prod_i D_i^{n_i} \Phi_{v}(x).
$$
Since $h \circ \Phi_{v}(x)=0$ and we have analytically continued $\Phi_{v},$
we also have that
$$
h (\prod_i D_i^{n_i} \Phi_{v}(x)) =0.$$ 

The definition of the $\Gamma$--series  
$\Phi_{v}(x)$ and the fact that $D_i=[v_i]$ are nilpotent in 
$\CC [\SSigma; \beta_\delta]_{\Re \beta}
/ Z \cdot \CC [\SSigma; \beta_\delta]_{\Re \beta}$
shows that any induced GKZ solution 
can be written as the product of a monomial in the variables
$x_i$ and an element of $\CC[u_j^{-1}, \log u_j] [[u_j]]$
where $u_j, 1 \leq j \leq n- {\rm rank} N,$ invariant variables
under the action of the character torus ${\rm Hom} (L, \CC^\times),$
where $L$ is the lattice of integral relations $(l_i)$
such that $\sum_{i=1}^k l_i v_i=0.$

Hence, 
in order to obtain the contradiction it is enough 
to show that, for $l \in L(\alpha,v)$ used in formula (\ref{eql}),
the Fourier coefficient of $x^l= \prod x_i^{l_i}$
in the expansion of $h (\prod_i D_i^{n_i} \Phi_{v}(x))$
is non-zero. Indeed, this coefficient 
is given by
$$
h(\prod  D_i^{n_i} \cdot [c(\alpha_\delta)] \cdot \prod \frac{1}{\Gamma(l_i+D_i+1)}).
$$
Notice that the terms that occur in the expansion of the 
expression in the argument 
of $h$ have degree at least $P,$
while $[w]$ is the only element of that degree
that occurs and its coefficient is nonzero.
Since $h([w]) \not= 0,$
the maximal property of $P$ implies that 
the coefficient of $x^l$ in the expansion 
of $h (\prod_i D_i^{n_i} \Phi_{v}(x))$
is indeed non-zero. 
This ends the proof of the 
linear independence result.
\end{proof} 

According to \cite[Corollary 3.9]{BH11},
the dimension of the space of solutions to the
better behaved GKZ system is exactly ${\rm vol} (\Delta),$ so
we conclude that the formal $\Gamma$--series produces the expected 
number of linearly independent analytic solutions
to the 
better behaved GKZ system.
More precisely, we have constructed the following
``topological mirror symmetry" map:

\begin{theorem}\label{thm:gammanontor}
The map
\begin{equation*}
\begin{split}
\big(\CC [\SSigma; \beta_\delta]_{\Re \beta}
/ Z \cdot \CC [\SSigma; \beta_\delta]_{\Re \beta}\big)^\vee
&\to {\mathcal Sol} (U_{\SSigma}) \\
f &\to (f \circ \Phi_{v})_{v \in K \cap N}
\end{split}
\end{equation*}
produces a complete system of 
${\rm vol} (\Delta)$ linearly independent 
solutions to the better behaved GKZ system
which are analytic in $U_{\SSigma}.$
\end{theorem}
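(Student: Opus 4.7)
The plan is to assemble the theorem from the three pieces already established in the section: (i) the previous theorem guarantees that, for every linear functional $f$, the tuple $(f\circ \Phi_v)_{v\in K\cap N}$ is actually a solution to the bbGKZ system, so the indicated map lands in $\mathcal{S}ol(U_{\SSigma})$ and is $\CC$--linear; (ii) the preceding proposition is exactly the injectivity statement for this same map; and (iii) a dimension count closes the argument. So the only thing left to do is match dimensions on both sides.

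First I would invoke Corollary \ref{cor:mcm} with $\chi = \beta_\delta$ and $\xi = \Re\beta$ to conclude that the source
$$\bigl(\CC[\SSigma;\beta_\delta]_{\Re\beta}/ Z\cdot \CC[\SSigma;\beta_\delta]_{\Re\beta}\bigr)^{\vee}$$
has $\CC$--dimension equal to $\mathrm{vol}(\Delta)$; here $\mathrm{vol}$ is the normalized volume of the polytope $\Delta$, and the application of Corollary \ref{cor:mcm} is legitimate because Proposition \ref{prop:regseq} asserts that the $Z_j$ form a regular sequence on this Cohen--Macaulay module, so the quotient by $(Z_1,\ldots,Z_d)$ has the asserted dimension. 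Next I would invoke \cite[Corollary 3.9]{BH11} to identify the target dimension $\dim_{\CC}\mathcal{S}ol(U_{\SSigma})$ with the same number $\mathrm{vol}(\Delta)$.

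Combining injectivity (the previous proposition) with the equality of source and target dimensions, the map is a $\CC$--linear isomorphism onto $\mathcal{S}ol(U_{\SSigma})$, so the images $(f\circ \Phi_v)_{v\in K\cap N}$ as $f$ ranges over a basis of the source give $\mathrm{vol}(\Delta)$ linearly independent solutions that span the entire solution space. Analyticity on $U_{\SSigma}$ is inherited from the analyticity of each $\Phi_v$ on $U_{\SSigma}$ noted after formula (\ref{def:gamma1}) via \cite[Proposition 2.12]{BH2}, together with the fact that $f$ is a linear functional on a finite--dimensional quotient.

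The only substantive obstacle, which has already been handled in the previous proposition, is the linear independence argument; the rest is a bookkeeping assembly of dimensions. If one wanted a self--contained variant that avoids citing \cite{BH11}, one would have to prove the matching upper bound $\dim \mathcal{S}ol(U_{\SSigma})\le\mathrm{vol}(\Delta)$ directly --- for example by a holonomicity/rank argument analogous to the classical GKZ case --- but since that bound is available off the shelf it is cleanest to simply quote it.
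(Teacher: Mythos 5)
Your proposal is correct and follows essentially the same route as the paper, which treats this theorem as an assembly of the preceding theorem (well-definedness of the map into solutions), the preceding proposition (injectivity), Corollary \ref{cor:mcm} applied to the shadow module $\CC[\SSigma;\beta_\delta]_{\Re\beta}$ (source dimension equals $\mathrm{vol}(\Delta)$), and \cite[Corollary 3.9]{BH11} (target dimension equals $\mathrm{vol}(\Delta)$).
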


\begin{remark} The result of this theorem should be compared
with the results of \cite[Corollary 2.21]{BH2}
which employ the ``leading term module'' $M(\beta)$
associated to the usual GKZ system for $\beta \in N.$
The modules 
$M(\beta)$ are useful algebraic tools defined in loc. cit.,
but the dimensions of $M(\beta)/ZM(\beta)$ are quite hard to control 
for general values of  $\beta.$ In contrast, 
the shadow modules $\CC[\SSigma; \beta_\delta]_{\Re \beta}$ 
are maximal Cohen-Macaulay. It is
possible to show 
that, for $\beta \in N,$ we have that 
$\CC[\SSigma; \beta]_{\beta} = \sum_{v \in K \cap N} M(\beta -v).$
\end{remark}

\begin{remark} It is possible to give a $K$--theoretic combinatorial 
description in the spirit of the discussion from
Section \ref{sec:kth} of the shadow modules 
$\CC [\SSigma; \beta_\delta]_{\Re \beta}
/ Z \cdot \CC [\SSigma; \beta_\delta]_{\Re \beta}.$
Somewhat surprisingly, in our approach such a description is related 
to the notion of a $\gamma$--deformed version of the
set $\Bx (\SSigma; \beta)$ mentioned in 
Remark \ref{rem:boxlimit} which we do not pursue in this 
note. It suffices to say that combining such a description
with the tools developed in \cite[Section 3]{BH2} provides
a $K$--theoretical version of the topological mirror symmetry 
map described above. 
\end{remark}

\section*{Acknowledgements} I would like to thank Lev Borisov
for very useful comments and discussions. Many of the ideas presented 
in this note have been developed in the course of our
joint collaboration. I would like to thank the organizers of the 
Maximilian Kreuzer Memorial Conference for the opportunity to 
speak at the conference in June 2011, and for the invitation to submit
a contribution to this volume. This work is dedicated to the memory of
Maximilian Kreuzer whose irresistible enthusiasm 
for subjects at the interface of string theory
and algebraic geometry remains a great source of inspiration. 
The topics of this note, in particular toric geometry and GKZ systems, 
are among those which benefited greatly from his ideas.

\end{document}